\pgfplotsset{compat=newest}
\theoremstyle{definition} 
\theoremstyle{plain} 
\newtheorem{theorem}{Theorem}
\theoremstyle{remark} 
\newtheorem*{remark}{Remark}
\newcommand{\R}{\mathbb{R}}
\newcommand{\usol}{u}
\newcommand{\uinc}{u_{inc}}
\newcommand{\usc}{u_{sc}}
\newcommand{\x}{x}
\newcommand{\azero}{\alpha_{0}}
\newcommand{\aone}{\alpha_{1}}
\newcommand{\J}{J}
\newcommand{\baru}{\bar{\usol}}
\newcommand{\eps}{\epsilon}
\newcommand{\Omeps}{\Omega_{\eps}}
\newcommand{\Om}{\Omega}
\newcommand{\vt}{v}
\newcommand{\nv}{\vec{n}}
\newcommand{\Vv}{\vec{V}}
\newcommand{\Wv}{\vec{W}}
\newcommand{\nab}{\nabla}
\newcommand{\Ga}{\Gamma_1}
\newcommand{\Gb}{\Gamma_2}
\newcommand{\Gc}{\Gamma_3}
\newcommand{\Gd}{\Gamma_4}
\newcommand{\Ge}{\Gamma_5}
\newcommand{\Lag}{\mathcal{L}}
\newcommand*\diff{\mathop{}\!\mathrm{d}}
\newcommand\restr[2]{{
		\left.\kern-\nulldelimiterspace 
		#1 
		\vphantom{\big|} 
		\right|_{#2} 
}}
\title{Shape Optimization for the Mitigation of Coastal Erosion via the Helmholtz Equation}
\author{ \href{https://orcid.org/0000-0001-9930-065X}{\includegraphics[scale=0.06]{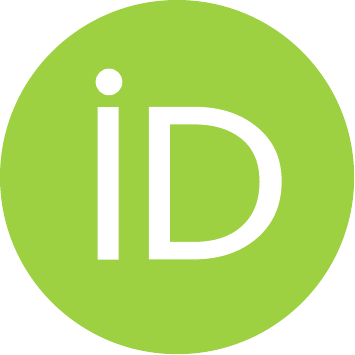}\hspace{1mm}Luka Schlegel} \\
	Department of Mathematics\\
	Universität Trier\\
	Universitätsring 15, 54296 Trier\\
	\texttt{schlegel@uni-trier.de} \\
	\And
	\href{https://orcid.org/0000-0001-7665-130X}{\includegraphics[scale=0.06]{orcid.pdf}\hspace{1mm}Volker Schulz} \\
	Department of Mathematics\\
	Universität Trier\\
	Universitätsring 15, 54296 Trier\\
	\texttt{volker.schulz@uni-trier.de} \\
}
\begin{document}
\maketitle

\begin{abstract}
	Coastal erosion describes the displacement of land caused by destructive sea waves, currents or tides. Major efforts have been made to mitigate these effects using groins, breakwaters and various other structures. We try to address this problem by applying shape optimization techniques on the obstacles. A first approach models the propagation of waves towards the coastline, using a 2D time-harmonic system based on the famous Helmholtz equation in the form of a scattering problem. The obstacle's shape is optimized over an appropriate cost function to minimize the height of water waves along the shoreline, without relying on a finite-dimensional design space, but based on shape calculus. 

\end{abstract}

\keywords{Coastal Erosion \and Shape Optimization \and Helmholtz Equation}

\section{Introduction}
Coastal erosion describes the displacement of land caused by destructive sea waves, currents or tides. Major efforts have been made to mitigate these effects using groins, breakwaters and various other structures. 
Among experimental set-ups to model the propagation of waves towards a shore and to find optimal wave-breaking obstacles, the focus has turned towards numerical simulations due to the continuously increasing computational performance. Essential contributions to the field of numerical coastal protection have been made for steady \cite{Azerad2005}\cite{Mohammadi2008}\cite{Keuthen2015} and unsteady \cite{Mohammadi2011}\cite{Mohammadi2012} descriptions of propagating waves. In this paper we select the Helmholtz Equation, that originates from the wave equation via separation of variables and assuming time independence. This paper builds up on the monographs \cite{Choi1987}\cite{Sokolowski1992}\cite{Delfour2011} to perform free-form shape optimization. In addition we strongly orientate on \cite{Schulz2014b}\cite{Schulz2016}\cite{Schulz2016a} that use the Lagrangian approach for shape optimization, i.e. calculating state, adjoint and the deformation of the mesh via the volume form of the shape derivative assembled on the right-hand-side of the linear elasticity equation, as Riesz representative of the shape derivative. The application of shape-calculus-based shape optimization to prevent coastal erosion by optimizing the form of the Helmholtz scatterer builds an extension to \cite{Azerad2005}, who relied on a fixed parametrization and to \cite{Keuthen2015}, who used a level set method for shape optimization. The paper is structured as follows: In Section \ref{HelmModForm} we formulate the PDE-constrained optimization problem. In Section \ref{sec:HelmAdjShapOpt} we will derive the necessary tools to solve this problem, by deriving the adjoint equation, the shape derivative in volume form and boundary form such as the topological derivative. The final part, Section \ref{sec:NumRes}, will then apply the results to firstly a simplified mesh and secondly to a more realistic mesh, picturing the Langue-de-Barbarie, a coastal section in the north of Dakar, Senegal that was severely affected by coastal erosion within the last decades.

\section{Model Formulation}
\label{HelmModForm}
Suppose we are given an open domain $\tilde{\Om}\subset\R^2$, which is split into the disjoint sets $\Om,D\subset\tilde{\Om}$ such that $\Om\cup D\cup\Ge=\tilde{\Om}$, $\Ga\cup\Gb\cup\Gc\cup\Gd=\partial\tilde{\Om}$ and $\Ga\cup\Gb\cup\Gc\cup\Gd\cup\Ge:=\Gamma$. In this setting we assume the variable, interior $\Ge$ and the fixed outer boundary  $\partial\tilde{\Om}$ to be at least Lipschitz. One simple example of such kind is visualized below in Figure \ref{fig:DomainHelmholtz}. 

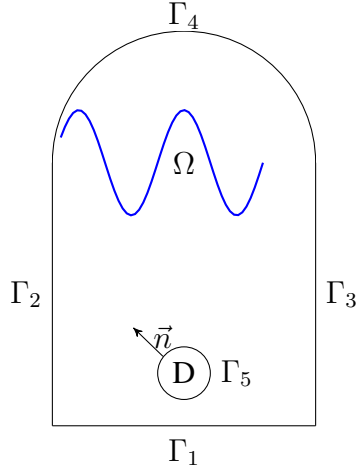
\begin{figure}[h]
	\centering
	\begin{tikzpicture}[scale=0.7]
	\begin{scope}
	\clip (0,0) rectangle (5,2.5);
	\draw (2.5,0) circle(2.5);
	\end{scope}	
	
	\draw (0,-5) --(0,0);
	\draw (0,-5) --(5,-5);
	\draw (5,-5) --(5,0);
	\draw[->, >=stealth', shorten >=1pt] (2.1,-3.68)   -- (1.5,-3.1);
	\draw (2.5,-4) circle (0.5);
	\tiny\draw[thick, blue] plot[domain=0.5:4*pi, samples=50]  (\x/pi,{sin(\x r)});
	
	\node (A) at (2.5,0) {\large $\Omega$};
	\node (B) at (2.5,-5.5) {\large $\Ga$};
	\node (C) at (-0.5,-2.5) {\large $\Gb$};
	\node (D) at (5.5,-2.5) {\large $\Gc$};
	\node (E) at (2.5,2.8) {\large $\Gd$};
	\node (F) at (3.5,-4) {\large $\Ge$};
	\node (G) at (2.5,-4) {\large D};
	\node (H) at (2.1,-3.3) {\large$\nv$};
	\end{tikzpicture}
	\caption[Illustrative Domain]{Illustrative Domain with Initial Circled Obstacle}
	\label{fig:DomainHelmholtz}
\end{figure}
We interpret $\Ga$ as coastline, $\Gb$ and $\Gc$ as lateral sea, $\Gd$ as open sea and $\Ge$ as obstacle boundary. 
The PDE-constrained optimization problem on this domain is defined as \cite{Azerad2005} 

\begin{align}
\min J(\Om) \label{Objective}\\
\textrm{s.t.} \quad
&&-\nabla^2\usol-k^2\usol&=0	&\text{on }\quad &\Om \label{Helmholtz}&&\\
&&\frac{\partial \usol}{\partial \nv}+k\alpha u &= 0	 &\text{on } \quad&\Ga, \Ge&& \label{PABC}\\ 
&&\usol_{|_{\Gb}}&=\usol_{|_{\Gc}}	&\text{on }\quad& \Gb, \Gc &&\label{PBC}\\
&&\frac{\partial \usol-\uinc}{\partial \nv}-ik\left(\usol-\uinc\right) &= 0	&\text{on }\quad& \Gd&& \label{Sommerfeld}
\end{align}

\begin{remark}
	The PDE constrained optimization problem is mainly taken from \cite{Azerad2005}, however we will tackle the problem by not relying on a finite design space but on shape calculus.
\end{remark}

In the following subsections, we will shortly elaborate on the components of (\ref{Objective})-(\ref{Sommerfeld}).

\subsection{Wave Description}
On the illustrative domain we intend to model water waves by complex solution field $u:\Om\rightarrow \mathbb{C}$ to the stationary elliptic Helmholtz equation, i.e.
\begin{align}
-\nabla^2\usol-k^2\usol=0
\end{align}
The complexity is introduced for a total field consisting of $\usol=u_{inc}+u_{sc}$, since the incoming wave is defined as $\uinc(x)=A \exp(ik\x*d_{\phi})$, where $k>0$ is a constant wavenumber, $A>0$ the amplitude or maximal surface elevation and $d_{\phi}$ the wave direction with $d_{\phi}=(\cos\phi,\sin\phi)$ for $\phi\in\R$.\\
In the course of this chapter we will also deal with a second problem, placing a transmissive obstacle $D$ in $\tilde{\Om}$ for porosity coefficient $\phi\in(0,1]$. For this we firstly modify the problem such that we are solving for two distinct wave fields on $\Om$ and $D$ (cf. to \cite{Yan1997}), i.e.
\begin{align}
-\nabla^2\usol-k^2\usol&=0 &\text{on} \quad \Om &&\label{HelmholtzOm}\\
-\nabla^2s-k^2s&=0 &\text{on} \quad D &&\label{HelmholtzD}
\end{align}
for transmission boundaries 
\begin{equation}
\begin{aligned}
\usol&=s&\text{on}\quad \Ge\\
\frac{\partial \usol}{\partial\nv}&=\phi\frac{\partial s}{\nv}&\text{on}\quad \Ge
\label{Eq:HelmholtztransmissiveBC2}
\end{aligned}
\end{equation}
for normal vector $\nv$.
The two wave fields can be rewritten by the usage of a discontinuous transmission coefficient $\phi$
\begin{align}
\phi:=\begin{cases}
\phi_1,&\text{ if }x\in\Om\\
\phi_2,&\text{ if }x\in D
\end{cases}
\end{align}
with interface boundary conditions in the sense of (\ref{Eq:HelmholtztransmissiveBC2}) as
\begin{equation}
\begin{aligned}
\relax
[\![u]\!]&=0\\
[\![\phi\frac{\partial u}{\partial \nv}]\!]&=0
\end{aligned}
\end{equation}
\begin{remark}
	The latter approach requires calculations on $\tilde{\Om}$, such that we can write integrals as
	\begin{align}
	\int_{\tilde{\Om}}=\int_\Om+\int_D
	\label{Eq:RewriteIntegrals}
	\end{align}
\end{remark}
\begin{remark}
	The obstacles can in the transmissive case be interpreted as permeable with regards to propagating waves, e.g. in \cite{Mohammadi2008} geotextile tubes are proposed.
\end{remark}
\begin{remark}
	The derivation of adjoint and shape derivative in Section \ref{sec:HelmAdjShapOpt} is based on (\ref{Objective})-(\ref{Sommerfeld}). The transmissive case, follows analogously by rewriting integrals according to (\ref{Eq:RewriteIntegrals}) with inclusion of transmissive boundaries. conditions.
\end{remark}
\begin{remark}
    Choosing $\phi_1=\phi_2=1$ let us solve the classical Helmholtz equation on the whole domain.
\end{remark}

\subsection{Periodic Boundary Condition}
Periodic boundary conditions are used as
\begin{align}
\usol_{|_{\Gb}}=\usol_{|_{\Gc}}
\end{align}
\begin{remark}
	These conditions allow to significantly reduce the computational domain size, e.g. instead of modelling the whole coastline, it allows the field calculation for a reduced domain assuming periodic reproducibility along the shore (cf. exemplifying to Figure \ref{Fig:PBC}).
	\begin{figure}[h]
		\centering
		\includegraphics[scale=0.75]{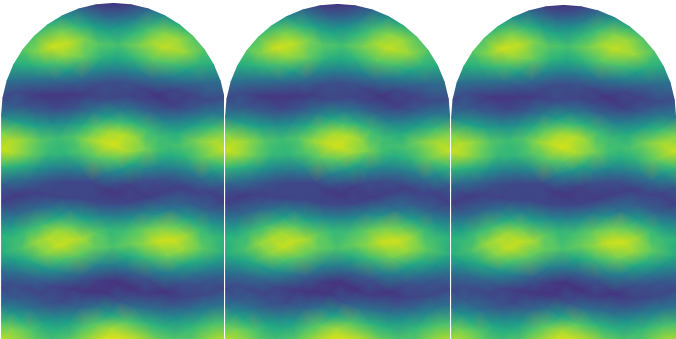}
		\caption[Usage of PBC]{Usage of Periodic Boundary Conditions}
		\label{Fig:PBC}
	\end{figure}
\end{remark}

\subsection{Sommerfeld Radiation Condition}
The model with is equipped with open or non-reflecting boundary at sea level in form of the Sommerfeld Radiation Condition \cite{Schot1992}. It ensures the uniqueness of the solution by demanding that scattered waves are not reflected at an infinite boundary by requiring
\begin{align}
\lim\limits_{r\rightarrow\infty} r^{1/2}\left(\frac{\partial \usc}{\partial r}-ik\usc\right) &= 0 \\ r&=|\x|
\end{align}
Since we are restricted to a finite domain, we adjust the condition using the first order approximation \cite{Azerad2005} on $\Gd$
\begin{align}
\frac{\partial \usol-\uinc}{\partial \nv}-ik(\usol-\uinc) = 0
\end{align}
where due to the half circled boundary $\frac{\partial}{\partial r}=\frac{\partial}{\partial \nv}$ for radial distance $r$. 
\begin{remark}
	For a more comprehensive view on wave propagation a second order approximation \cite{Shirron1998} or a "Perfectly Matched Layer" \cite{Becache2004} could be used. However, latter needs particular attention in a shape optimization routine.
\end{remark}

\subsection{Partially Absorbing Boundary Condition}
We assume partial reflection of the waves at the coast and at the obstacle, by partial absorbing boundary condition (\ref{PABC}) on $\Ga$ and $\Ge$, i.e.
\begin{align}
\frac{\partial \usol}{\partial \nv}+k\alpha u = 0	 
\end{align}
where $\alpha=\azero+i\aone$ represents the complex transmission coefficient as introduced in \cite{Berkhoff1976}. 
\begin{remark}
	The general solution was derived by Berkhoff \cite{Berkhoff1976} for $\alpha$ as
	\begin{equation}
	\begin{aligned}
	\azero = \frac{2K\sin\beta\cos\gamma}{1+K^2+2K\cos\beta}\\
	\aone = \frac{(1-K^2)\cos\gamma}{1+K^2+2K\cos\beta}	
	\end{aligned}
	\label{Eq:BerkhoffSol}
	\end{equation}
	with reflection coefficient $K$, reflection phase angle $\beta$ and the incident wave direction $\gamma$. 
	
	\begin{remark}
		The choice of $\alpha$ is a priori a rocky question as it needs to incorporate the angle of the incoming such as already already scattered waves.
		Since the preceding formulation is based on the assumption that the transmission coefficient is known for all parts of the boundary, we follow \cite{Isaacson1990} for simplification and set $\gamma=0$ and $\beta=0$, which leads for (\ref{Eq:BerkhoffSol}) to: 
		\begin{equation}
		\begin{aligned}
		\azero&=0\\
		\aone&=\frac{1-K}{1+K}
		\end{aligned}
		\label{Eq:Isaacsonalpha}
		\end{equation}
		
	\end{remark}	
	
\end{remark}
\begin{remark}
	In \cite{Azerad2005} and \cite{Keuthen2015} the simple case for $\alpha=0$ is considered such that (\ref{PABC}) reduces to $\frac{\partial \usol}{\partial n}=0$ which is commonly referred to as sound-hard scattering \cite{Colton1992} implying
	\begin{align}
	\frac{\partial \uinc}{\partial \nv}=-\frac{\partial \usc}{\partial \nv}
	\end{align}
	This assumption simplifies not only the calculation for the field but also for the shape derivative as we will see in Section \ref{sec:HelmAdjShapOpt}. However, this may lead to undesired reflections at respective boundaries.
\end{remark}
\begin{remark}
	Frequently obstacle problems with Dirichlet boundary conditions are investigated, which is referred to as sound-soft scattering \cite{Colton1992}. We will restrict ourself to the sound-hard case and otherwise refer to \cite{Feijoo2003}.
\end{remark}

\subsection{Objective Function}
An obstacle is considered to be optimal, if the squared difference of wave $u$ and target height $u_0$ is minimized and the distribution is as uniform as possible along the shore. Hence, we define the objective $J:\Om\rightarrow\R$
\begin{enumerate}
	\item for single directions and frequencies as:
	\begin{align}
	\J_1(\Om)=&||u-\tilde{u}||_{L^2_{\R}(\Ga)}^2+\xi||u-\baru_{\Ga}||_{L^2_{\R}(\Ga)}^2 \label{Eq:ObjectiveSingle}
	\end{align}
	for target height $\tilde{u}$ such as variance weight $\xi$ and mean elevation $\baru_{\Ga}=1/l\int_{\Ga}u\diff s$.
	\item  for multiple directions $(\phi_j)_{1\leq j \leq N}$ with different weights $w_j$ each for different wave numbers $(k_i)_{1\leq i \leq M}$ as:
	\begin{align}
	J_2(\Om)=\sum_{i,j}^{M,N}w_j\J_{k_i,\phi_j}(\Om)
	\label{Eq:ObjectiveSum}
	\end{align}	
\end{enumerate}
In either case to ensure that the obstacle is not becoming arbitrarily large we add a volume penalty controlled by $\nu_1\geq0$, i.e.
\begin{align}
J_3=\nu_1\int_{\Om}1\diff x
\end{align}
and a perimeter regularization, controlled by the parameter $\nu_2\geq0$ as
\begin{align}
J_4=\nu_2\int_{\Ge}1\diff\Ge
\end{align}
\begin{remark}
	In (\ref{Eq:ObjectiveSingle}) we follow \cite{Keuthen2015} and define
	\begin{align}
	(f,g)_{L^2_{\R}(A)}:=\Re(f,g)_{L^2_{\mathbb{C}(A)}}:=\Re(f,\bar{g})_{L^2(A)} \label{Eq:CompProduct}
	\end{align}
\end{remark}
\begin{remark}
	Defining the inner product as in (\ref{Eq:CompProduct}) is beneficial for numerical implementations as described in Section \ref{sec:NumRes}, since $L^2_{\R}(A)$ would lead to mixed spaces that are cumbersome to model.
\end{remark}

\section{Adjoint-Based Shape \& Topology Optimization}
\label{sec:HelmAdjShapOpt}
In this section we will first introduce the necessary tools for adjoint-based shape optimization, before we apply techniques to the PDE-constrained problem from the previous section.
\subsection{Notations and Definitions}
The idea of shape optimization is to deform an object ideally to minimize some target functional. Hence, to find a suitable matter of deforming we are interested in some shape analogy of a classical derivative. Here we use a methodology that is commonly used in shape optimization and extensively elaborated in various works \cite{Choi1987}\cite{Sokolowski1992}\cite{Delfour2011}.\\
In this section we fix notations and definitions following \cite{Schulz2016}\cite{Schulz2016a} and amend whenever it appears necessary.
We start by introducing a family of mappings $\{\phi_\eps\}_{\eps\in[0,\tau]}$ for $\tau>0$ that are used to map each current position $\x\in\Om$ to another by $\phi_\eps(\x)$, where we choose the vector field $\Vv$ as the direction for the so called perturbation of identity
\begin{align}
\x_\eps=\phi_\eps(\x)=\x+\eps \Vv(\x)
\label{Eq:4poi}
\end{align}
According to this methodology, we can map the whole domain $\Om$ to another $\Omeps$ such that 
\begin{align}
\Omeps=\phi_\eps(\Om)=\{\x_\eps|x+\eps \Vv(x),x\in\Om\}
\label{Eq:5domain}
\end{align}
Minimization of a generic functional dependent on the domain $J:\Om\rightarrow\R$ often requires the derivatives. Hence, we define the Eulerian Derivative as 
\begin{align}
DJ(\Om)[\Vv]=lim_{\eps\rightarrow 0^+} \frac{J(\Om_\eps)-J(\Om)}{\eps}\label{Eq:7EulerDer}
\end{align}
Commonly, this expression is called shape derivative of $J$ at $\Om$ in direction $\Vv$ and in this sense $J$ shape differentiable at $\Om$ if for all directions $\Vv$ the Eulerian derivative exists and the mapping $\Vv\mapsto DJ(\Om)[\Vv]$ is linear and continuous.
In addition, we define the material derivative of some scalar function $p:\Om_\eps\rightarrow\R$ at $x\in\Om$ with respect to the deformation $\phi_\eps$ as
\begin{align}
	D_mp(x):=lim_{\eps\rightarrow0^+}\frac{p\circ \phi_\eps(x)-p(x)}{\eps}=\frac{d^+}{d\eps}\restr{(p\circ \phi_\eps)(x)}{\eps=0}\label{Eq:8MatDer}
\end{align} 
and the corresponding shape derivative for a scalar $p$.
In the following, we will use the abbreviation $\dot{p}$ to mark the material derivative of $p$. In Section \ref{sec:HelmAdjShapOpt} we will need to have the following calculation rules on board \cite{Berggren2010}
\begin{align}
	D_m(pq)&=D_mpq+pD_mq\label{Eq:10MatProdR}\\
	D_m\nab p&=\nab D_mp-\nab \Vv^T\nab p\label{Eq:11MatGradR}\\
	D_m(\nab q^T\nab p)&=\nab D_mp^T\nab q-\nab q^T(\nab \Vv+\nab \Vv^T)\nab p+\nab p^T\nab D_mq \label{Eq:12MatGradProdR}
\end{align}
The basic idea in the proof of the shape derivative in the next section will be to pull back each integral defined on the on the transformed field back to the original configuration. We therefore need to state the following rule for differentiating domain integrals \cite{Berggren2010}.
\begin{align}
	\frac{d^+}{d\eps}\restr{\left(\int_{\Om_{\eps}}p(\eps)\right)}{\eps=0}=\int_\Om(D_mp+\nab\cdot \Vv p) \label{Eq:13DoaminR}
\end{align}

\subsection{Adjoint-Based Shape \& Topology Optimization}

We reformulate the constrained optimization problem (\ref{Objective})-(\ref{Sommerfeld}) with the help of the Lagrangian
\begin{align}
\Lag(\Om,\usol,\vt,\vt_1,\vt_2,\vt_3)=\J_1(\Om)+a(\Om;\usol,\vt,\vt_1,\vt_2,\vt_3)-l(\Om;\vt_3)
\label{Eq:LagrangianHelmholtz}
\end{align}
where $J_1$ is the objective (\ref{Eq:ObjectiveSingle}), $a(\Om;\usol,\vt,\vt_1,\vt_2,\vt_3)$ is the bilinear form obtained from boundary value problem (\ref{Helmholtz})-(\ref{Sommerfeld}) and $v_1,v_2,v_3\in H^1(\Om)$ are multipliers.
\begin{equation}
\begin{aligned}
a(\Om;\usol,\vt,\vt_1,\vt_2,\vt_3)=&\big(\nab\usol,\nab\vt\big)_{L^2_{\R}(\Om)}- k^2\big(\usol,\vt\big)_{L^2_{\R}(\Om)}-\big(\frac{\partial\usol}{\partial\nv},\vt\big)_{L^2_{\R}(\Gamma)}-\\
&\big(\frac{\partial \usol}{\partial \nv}+k\alpha\usol,\vt_1\big)_{L^2_{\R}(\Ga,\Ge)}+\big(u,\vt_2\big)_{L^2_{\R}(\Gb)}+(u,\vt_2)_{L^2_{\R}(\Gc)}-\\
&\big(\frac{\partial \usol}{\partial \nv}-ik(\usol),\vt_3\big)_{L^2_{\R}(\Gd)}
\label{Eq:WeakForm}
\end{aligned}
\end{equation}
and $l(\Om;\vt_3)$ is the bilinear form defined by
\begin{equation}
\begin{aligned}
l(\Om,\vt_3)=&\big(\frac{\partial \uinc}{\partial \nv}-ik(\uinc),\vt_3\big)_{L^2_{\R}(\Gd)}
\end{aligned}
\end{equation}
\begin{remark}
	The transmissive case leads us to weak forms as
	\begin{equation}
	\begin{aligned}
	a(\Om;\usol,\vt,\vt_1,\vt_2,\vt_3)=&\big(\phi\nab\usol,\nab\vt\big)_{L^2_{\R}(\Om)}- k^2\big(\phi\usol,\vt\big)_{L^2_{\R}(\Om)}-\big(\phi\frac{\partial\usol}{\partial\nv},\vt\big)_{L^2_{\R}(\partial\tilde{\Om})}-\\
	&\big(\phi\frac{\partial\usol}{\partial\nv},\vt\big)_{L^2_{\R,[\![]\!]}(\Ge)}-
	\big(\phi\frac{\partial \usol}{\partial \nv}-k\alpha\usol,\vt_2\big)_{L^2_{\R}(\Ga)}-(u,\vt_2)_{L^2_{\R}(\Gb)}-\\
	&\big(u,\vt_2\big)_{L^2_{\R}(\Gc)}-
	\big(\frac{\partial \usol}{\partial \nv}-ik(\usol),\vt_3\big)_{L^2_{\R}(\Gd)}
	\label{Eq:WeakFormTrans}
	\end{aligned}
	\end{equation}
	and $l(\Om;\vt_3)$ is the bilinear form defined by
	\begin{equation}
	\begin{aligned}
	l(\Om,\vt_3)=&\big(\frac{\partial \uinc}{\partial \nv}-ik(\uinc),\vt_3\big)_{L^2_{\R}(\Gd)}
	\end{aligned}
	\end{equation}
	where ${L^2_{\R,[\![]\!]}(\Ge)}$ denotes the usage of jumps in the associated integrals.
\end{remark}
\begin{remark}
	To continue with adjoint calculations we are required to integrate twice by parts on the derivative-containing terms such that we obtain exemplifying obtain for non-transmissive obstacle
	\begin{equation}
	\begin{aligned}
	a(\Om;\vt,\usol)=&-\big(\usol,\nab^2\vt\big)_{L^2_{\R}(\Om)}-\big(\vt,\frac{\partial\usol }{\partial\nv}\big)_{L^2_{\R}(\Gamma)}+\big(\usol,\frac{\partial\vt}{\partial\nv}\big)_{L^2_{\R}(\Gamma)}-k^2\big(\usol,\vt\big)_{L^2_{\R}(\Om)}-\\
	&\big(\frac{\partial \usol}{\partial \nv}+k\alpha\usol,\vt_1\big)_{L^2_{\R}(\Ga,\Ge)}-\big(u,\vt_2\big)_{L^2_{\R}(\Gb)}-\big(u,\vt_2\big)_{L^2_{\R}(\Gc)}-\\
	&\big(\frac{\partial \usol}{\partial \nv}-ik(\usol),\vt_3\big)_{L^2_{\R}(\Gd)}
	\end{aligned}
	\end{equation}
\end{remark}
\begin{remark}
	Instead of multipliers it is also possible to derive adjoint and shape derivative based on an immediate insertion of boundary conditions.
\end{remark}
\begin{remark}
	We can regard the Lagrangian (\ref{Eq:LagrangianHelmholtz}) w.r.t. $J_2$ in the same manner. In the following we restrict to $J_1$ for readability.
\end{remark}

We obtain the state equation from differentiating the Lagrangian for $\vt$ and  the adjoint equation from differentiating w.r.t. $\usol$. As in \cite{Schulz2014b}, the theorem of Correa and Seger \cite{Correa1985} is applied on the right hand side of (\ref{Eq:191SaddleHelm}) so that the following equality holds 
\begin{align}
J_1(\Om)=\min_{\usol}\max_{\vt}\Lag(\Om,\usol,\vt)
\label{Eq:191SaddleHelm}
\end{align}

The adjoint is formulated in the following theorem:  
\begin{theorem}
	(Adjoint) Assume that the elliptic PDE problem (\ref{Helmholtz})-(\ref{Sommerfeld}) is $H^1$-regular and $\alpha$ as in (\ref{Eq:Isaacsonalpha}), so that its solution $\usol$ is at least in $H^1(\Om)$. Then the adjoint in strong form (without perimeter regularization and variance penalty) is given by
	
	\begin{equation}
	\begin{aligned}
	-\nab^2v-k^2v&=0 \quad &\text{ on }&\quad \Om&&\\
	\text{ s.t. }\frac{\partial v}{\partial \nv}&=-(\usol-u_0)+k\alpha v &\text{ on } &\quad\Ga&&\\
	\frac{\partial v}{\partial \nv}&=k\alpha v &\text{ on }& \quad\Ge&&\\
	\frac{\partial v}{\partial \nv}&=-ik v &\text{ on }&\quad \Gd&&\\
	\vt&=0 &\text{ on }&\quad \Gb,\Gc&&
	\end{aligned}
	\label{Eq:19Adjoint}
	\end{equation}
\end{theorem}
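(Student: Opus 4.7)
The plan is to differentiate the Lagrangian $\Lag$ with respect to $\usol$ in an arbitrary test direction $\delta\usol$ and impose $D_\usol\Lag[\delta\usol]=0$. Starting from the weak form in (\ref{Eq:WeakForm}), I apply Green's identity once to move the Laplacian off $\delta\usol$ in the volume contribution $(\nab\usol,\nab\vt)_{L^2_\R(\Om)}$, producing an interior term $(\delta\usol,-\nab^2\vt)_{L^2_\R(\Om)}$ and a boundary term $(\delta\usol,\partial\vt/\partial\nv)_{L^2_\R(\Gamma)}$. The variation of $\J_1$ supplies a data-mismatch contribution $(\usol-\tilde\usol,\delta\usol)_{L^2_\R(\Ga)}$, whose prefactor of two is absorbed by rescaling $\vt$, which is standard in adjoint formulations.

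Next I localize. Testing against $\delta\usol\in C_c^\infty(\Om)$ kills every boundary contribution and isolates $(-\nab^2\vt-k^2\vt,\delta\usol)_{L^2_\R(\Om)}=0$, yielding the interior adjoint Helmholtz equation. On each piece of $\Gamma$ the Lipschitz traces $\delta\usol$ and $\partial\delta\usol/\partial\nv$ are independent, so their coefficients must vanish separately as complex-valued functions. The coefficient of $\partial\delta\usol/\partial\nv$ determines the auxiliary multipliers: on $\Ga\cup\Ge$ one reads off $\vt_1=-\vt$, on $\Gd$ one reads off $\vt_3=-\vt$, and on $\Gb\cup\Gc$ (where no conormal multiplier appears) one is forced to $\vt=0$. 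Substituting these identities back into the $\delta\usol$-coefficient on each boundary piece recovers the four Robin-type conditions stated in the theorem, with the objective term appearing only on $\Ga$.

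The main subtlety is the sign pattern of the adjoint impedance coefficients, which is dictated by the convention $(f,g)_{L^2_\R}:=\Re(f,\bar g)_{L^2}$. Purely imaginary multiplicative factors pass through the inner product with a sign flip, since $(ik\delta\usol,\vt_3)_{L^2_\R}=(\delta\usol,-ik\vt_3)_{L^2_\R}$ and, with $\alpha=i\aone$ from (\ref{Eq:Isaacsonalpha}), $(k\alpha\delta\usol,\vt_1)_{L^2_\R}=(\delta\usol,-k\alpha\vt_1)_{L^2_\R}$. Combined with $\vt_1=-\vt$ and $\vt_3=-\vt$, these flips convert the forward Sommerfeld factor $-ik$ into the adjoint coefficient $-ik\vt$ on $\Gd$ and the forward impedance $+k\alpha$ into $+k\alpha\vt$ on $\Ga\cup\Ge$, matching the right-hand sides of (\ref{Eq:19Adjoint}) exactly. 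I would verify this cleanly by writing $\delta\usol=\phi_r+i\phi_i$ with real $\phi_r,\phi_i$ and checking that the independent vanishing of the two real coefficients of each boundary term is equivalent to the single complex boundary identity of the theorem. The hypothesis that $\usol\in H^1(\Om)$ legitimizes all traces and the single application of Green's identity on which this derivation rests, so once the conjugation bookkeeping is under control the argument is complete.
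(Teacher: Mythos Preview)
Your proposal is correct and follows essentially the same route as the paper: differentiate the Lagrangian in direction $\delta\usol$, localize with compactly supported test functions to obtain the interior Helmholtz equation, use the independence of Dirichlet and Neumann traces on each boundary piece to identify $\vt_1=-\vt$, $\vt_3=-\vt$ and $\vt=0$ on $\Gb\cup\Gc$, and then read off the Robin conditions. Your treatment of the sign pattern via the purely imaginary nature of $\alpha$ and the conjugation in $(\cdot,\cdot)_{L^2_\R}$ is in fact more explicit than the paper's one-line appeal to ``complex symmetry of the complex inner product,'' but the underlying argument is the same.
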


\begin{proof}
	Any directional derivative of $\mathcal{L}$ w.r.t. $\tilde{\usol}$ must be zero at the solution $\usol$, hence
	\begin{equation}
	\begin{aligned}
	0=&\frac{d}{d\eps}\restr{\mathcal{L}(\usol +\eps\tilde{\usol},\vt,\vt_1,\vt_2,\vt_2,\vt_3)}{\eps=0}\\
	=&\frac{d}{d\eps}	\Big[\frac{1}{2}\big(\usol+\eps\tilde{\usol}-u_0,\usol+\eps\tilde{\usol}-u_0\big)_{L^2_{\R}(\Ga)}-\big(\usol+\eps\tilde{\usol},\nab^2\vt\big)_{L^2_{\R}(\Om)}-\\
	&\big(\vt,\frac{\partial\usol+\eps\tilde{\usol}}{\partial\nv}\big)_{L^2_{\R}(\Gamma)}+\big(\usol+\eps\tilde{\usol},\frac{\partial\vt}{\partial\nv}\big)_{L^2_{\R}(\Gamma)}-k^2\big(\usol+\eps\tilde{\usol},\vt\big)_{L^2_{\R}(\Om)}-\\
	&\big(\frac{\partial \big(\usol+\eps\tilde{\usol})-\uinc}{\partial \nv}-ik(\usol+\eps\tilde{\usol}-u_{inc}),\vt_3\big)_{L^2_{\R}(\Gd)}-\\
	&\big(\frac{\partial (\usol+\eps\tilde{\usol})}{\partial \nv}+k\alpha(\usol+\eps\tilde{\usol}),\vt_1\big)_{L^2_{\R}(\Ga,\Ge)}+\\		&\restr{\big(u+\eps\tilde{\usol},\vt_2\big)_{L^2_{\R}(\Gb)}+\big(u+\eps\tilde{\usol},\vt_2\big)_{L^2_{\R}(\Gc)}\Big]}{\eps=0}\\
	=&\big(\usol-u_0,\tilde{\usol}\big)_{L^2_{\R}(\Ga)}-\big(\tilde{\usol},\nab^2\vt\big)_{L^2_{\R}(\Om)}-\\
	&\big(\vt,\frac{\partial\tilde{\usol}}{\partial\nv}\big)_{L^2_{\R}(\Gamma)}+\big(\tilde{\usol},\frac{\partial\vt}{\partial\nv}\big)_{L^2_{\R}(\Gamma)}-k^2\big(\tilde{\usol},\vt\big)_{L^2_{\R}(\Om)}-\\
	&\big(\frac{ (\partial\tilde{\usol})}{\partial \nv}-ik(\tilde{\usol}),\vt_3\big)_{L^2_{\R}(\Gd)}-(\frac{\partial (\tilde{\usol})}{\partial \nv}+k\alpha(\tilde{\usol}),\vt_1\big)_{L^2_{\R}(\Ga,\Ge)}-\\		&\big(\tilde{\usol},\vt_2\big)_{L^2_{\R}(\Gb)}-\big(\tilde{\usol},\vt_2\big)_{L^2_{\R}(\Gc)}
	\end{aligned}
	\end{equation}
	From this we get the adjoint in strong form for the domain $\Om$ by taking the variation as $\tilde{u}\in C_0^\infty$, we obtain
	\begin{equation}
	\begin{aligned}
	-\nab^2v-k^2v&=0 \quad \text{ on } \Om\\
	\end{aligned}
	\end{equation}
	In addition $\tilde{u}\in H^1_0(\Om)$ leads to
	\begin{align}
	-\big(\vt,\frac{\partial\tilde{\usol}}{\partial\nv}\big)_{L^2_{\R}(\Gamma)}-\big(\vt_3,\frac{\partial\tilde{\usol}}{\partial\nv}\big)_{L^2_{\R}(\Gd)}-\big(\vt_1,\frac{\partial\tilde{\usol}}{\partial\nv}\big)_{L^2_{\R}(\Ga,\Ge)}=0
	\end{align}
	From this we know that $\vt=0$ on $\Gb$ and $\Gc$, such as $\vt=-v_3$  on $\Gd$ and  $\vt=-v_1$ on $\Ga$ and $\Ge$. With this, $\tilde{u}\in H^1(\Om)$ leads to
	\begin{equation}
	\begin{aligned}
	&\big(\usol-u_0,\tilde{\usol}\big)_{L^2_{\R}(\Ga)}+\big(\tilde{\usol},\frac{\partial\vt}{\partial\nv}\big)_{L^2_{\R}(\Gamma)}+\\
	&\big(ik(\tilde{\usol}),\vt_3\big)_{L^2_{\R}(\Gd)}-\big(k\alpha(\tilde{\usol}),\vt\big)_{L^2_{\R}(\Ga,\Ge)}+\\		&\big(\tilde{\usol},\vt_2\big)_{L^2_{\R}(\Gb)}+\big(\tilde{\usol},\vt_2\big)_{L^2_{\R}(\Gc)}=0
	\end{aligned}
	\end{equation}
	Which provides us with boundary conditions for normal $\nv$ as claimed, due to the complex  symmetry of the complex inner product.
\end{proof}

Having computed both $\vt$ and $\usol$ we can go over and compute the shape derivative (\ref{Eq:7EulerDer}). 
\begin{remark}
	Shape derivatives can for a sufficiently smooth domain be described via boundary formulations using Hadamard's structure theorem \cite{Sokolowski1992}. The integral over $\Om$ is then replaced by an integral over $\Ge$ that acts on the associated normal vector. In this paper we calculate the deformation field based on the domain formulation and use the boundary formulation for the topological derivative.
\end{remark}

\begin{theorem} \label{Theo:2}
	(Shape Derivative Volume Form)
	Assume that the elliptic PDE problem (\ref{Helmholtz})-(\ref{Sommerfeld}) is $H^1$-regular, so that its solution $\usol$ is at least in $H^1(\Om)$. Moreover, assume that
	the adjoint equation (\ref{Eq:19Adjoint}) admits a solution $\vt\in H^1(\Om)$. Then the
	shape derivative of the objective $J_1$ (without perimeter regularization for full-reflecting boundaries) at $\Om$ in the
	direction $\Vv$ is given by
	
	\begin{equation}
	\begin{aligned}
	DJ_{1,\Om}(\Om)[\Vv]=&\Re\Big[\int_\Om\left(\nab v\cdot\nab\usol-k^2v\usol\right)div \Vv\diff x\\
	&-\int_\Om\nab v\cdot(\nab \Vv+\nab \Vv^T)\nab\usol\diff x\Big]
	\end{aligned}
	\label{Eq:21SD}
	\end{equation}
\end{theorem}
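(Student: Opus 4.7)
The plan is to use the Lagrangian reformulation (\ref{Eq:LagrangianHelmholtz}) together with the Correa--Seger saddle-point identity (\ref{Eq:191SaddleHelm}). At the solution $\usol$ and adjoint $\vt$ the latter yields
$$DJ_1(\Om)[\Vv] \;=\; \frac{\partial}{\partial \Om}\Lag(\Om,\usol,\vt)[\Vv],$$
so I may treat $\usol$ and $\vt$ as fixed functions and only differentiate with respect to the domain, which makes their material derivatives vanish, $\dot \usol=\dot \vt=0$. The computation then reduces to applying the domain-integral rule (\ref{Eq:13DoaminR}) and the material derivative calculus (\ref{Eq:10MatProdR})--(\ref{Eq:12MatGradProdR}) term by term.

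First I would dispose of everything that cannot see the moving boundary. The objective $\J_1$ is supported on the coast $\Ga\subset\partial\tilde\Om$, and the multiplier pairings with $\vt,\vt_1,\vt_2,\vt_3$ on $\Ga,\Gb,\Gc,\Gd$ sit on the fixed outer boundary; since the deformation $\Vv$ is compactly supported in a neighbourhood of $\Ge$, none of these contributes. What remains are the Helmholtz volume pairing $(\nab \usol,\nab \vt)_{L^2(\Om)}-k^2(\usol,\vt)_{L^2(\Om)}$, plus the boundary pieces on the moving $\Ge$ inherited from $(\partial\usol/\partial\nv,\vt)_{L^2(\Gamma)}$ and $(\partial\usol/\partial\nv+k\alpha \usol,\vt_1)_{L^2(\Ga,\Ge)}$.

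The heart of the computation is the derivative of the two volume integrals. With $\dot\usol=\dot\vt=0$, rule (\ref{Eq:10MatProdR}) gives $D_m(\usol\vt)=0$, and (\ref{Eq:12MatGradProdR}) yields $D_m(\nab\usol\cdot\nab\vt)=-\nab\vt^{T}(\nab\Vv+\nab\Vv^{T})\nab\usol$. Substituting both into (\ref{Eq:13DoaminR}) and combining produces
$$\int_\Om\bigl(\nab \vt\cdot\nab \usol - k^2\vt\usol\bigr)\Div\Vv\,\diff x \;-\; \int_\Om\nab \vt\cdot(\nab\Vv+\nab\Vv^{T})\nab\usol\,\diff x,$$
after which taking the real part, as dictated by the convention (\ref{Eq:CompProduct}), reproduces (\ref{Eq:21SD}) exactly.

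The technical obstacle I expect is justifying that the boundary multiplier terms on $\Ge$ drop out. In the full-reflecting regime $\alpha=0$ on $\Ga,\Ge$ the state equation forces $\partial\usol/\partial\nv=0$ on $\Ge$, so both integrands vanish at the reference configuration; the cleanest route is to observe that $\partial\usol/\partial\nv=0$ is then a natural boundary condition for the weak formulation, so the corresponding multiplier contributions on $\Ge$ may be removed from $\Lag$ from the outset without altering its value at the saddle point. Once this reduction is in place, only the volume contributions survive and formula (\ref{Eq:21SD}) follows immediately.
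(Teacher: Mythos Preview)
Your proposal reaches the right formula but diverges from the paper's proof in one structural respect: you invoke Correa--Seger to set $\dot\usol=\dot\vt=0$ \emph{before} computing, whereas the paper keeps the material derivatives $D_m\usol$ and $D_m\vt$ throughout the pull-back, collects the resulting terms, and only then observes that they assemble into the weak state and adjoint equations tested against $D_m\vt$ and $D_m\usol$ respectively, so that the saddle-point conditions make them cancel. Your shortcut is the standard ``frozen saddle'' heuristic and is legitimate here, but it hides exactly the mechanism the paper wants to display: that the extra $D_m$-terms vanish \emph{because} state and adjoint are satisfied, not by fiat. What your route buys is brevity; what the paper's route buys is a self-contained argument that does not presuppose the reader accepts the $\dot\usol=\dot\vt=0$ interpretation of Correa--Seger on a moving domain. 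On the $\Ge$ boundary terms you are in fact more explicit than the paper, which simply treats the two volume integrals and leaves the disappearance of the $\Ge$ contributions implicit; your observation that for $\alpha=0$ the condition $\partial\usol/\partial\nv=0$ is natural and can be absorbed into the weak form is the correct justification.
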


\begin{proof}
	The basic idea is to pull all expressions back to the original configuration \cite{Feijoo2003}. For readability we analyse each term by its own. We note that all terms, including the objective, solely dependent on boundaries other than $\Ge$ vanish, since these are defined to be invariant under perturbations of the domain. We start by terms of leading order, where we first use (\ref{Eq:13DoaminR}) e.g.
	\begin{align}
	\frac{d^+}{d\eps}\restr{\left((\nab\usol,\nab\vt)_{L^2_{\R}(\Om_\eps)}\right)}{\eps=0}=D_m(\nab\usol,\nab\vt)_{L^2_{\R}(\Om)}+div(\Vv) (\nab\usol,\nab\vt)_{L^2_{\R}(\Om)}
	\end{align}
	Then the first integral is rewritten using (\ref{Eq:12MatGradProdR})
	\begin{equation}
	\begin{aligned}
	=&(\nab D_mu,\nab \vt)_{L^2_{\R}(\Om)}-(\nab \usol,(\nab \Vv+\nab \Vv^T)\nab \vt)_{L^2_{\R}(\Om)}+
	\\&(\nab\vt,\nab D_m\usol)_{L^2_{\R}(\Om)}+div(\Vv) (\nab\usol,\nab\vt)_{L^2_{\R}(\Om)})
	\end{aligned}
	\end{equation}
	For the second integral we obtain again using (\ref{Eq:13DoaminR})
	\begin{align}
	\frac{d^+}{d\eps}\restr{-\left((k^2\usol,\vt)_{L^2_{\R}(\Om_\eps)}\right)}{\eps=0}=-D_m(k^2\usol,\vt)_{L^2_{\R}(\Om)}-div(\Vv) (k^2u,\vt)_{L^2_{\R}(\Om)}
	\end{align}
	Similar to before the first integral is rewritten using (\ref{Eq:10MatProdR}), such that we get
	\begin{align}
	=-(k^2D_m\usol,\vt)_{L^2_{\R}(\Om)}-(k^2\usol,D_m\vt)_{L^2_{\R}(\Om)}-div(\Vv)( k^2(\usol,\vt)_{L^2_{\R}(\Om)}
	\end{align}
	If we finally rearrange the terms with $D_m(u)$ and $D_m(\vt)$, let them act as test functions, apply the saddle point conditions, which means that the state equation (\ref{Eq:19Adjoint}) and adjoint equation (\ref{Eq:19Adjoint}) are fulfilled, the terms consisting $D_m(u)$ and $D_m(\vt)$ cancel. By adding all terms above up and using the definition of the inner product (\ref{Eq:CompProduct}), the shape derivative $DJ_1[\Vv]$ is established.
\end{proof}
\begin{remark}
	In case of partial reflection the boundary terms are obtained by observing
	\begin{align}
	\frac{d^+}{d\eps}\restr{\left((\alpha\usol,\vt)_{L^2_{\R}(\Gamma_{5\eps})}\right)}{\eps=0}=D_m(\alpha\usol,\vt)_{L^2_{\R}(\Gamma_{5})}+div_{\Gamma_{5}}(\Vv)(\alpha\usol,\vt)_{L^2_{\R}(\Gamma_{5})}
	\end{align}
	with the convention
	\begin{align}
	div_{\Gamma}(\Vv)=div(\Vv)-\nv\cdot(\nab \Vv)\nv
	\end{align}
	and applying saddle point conditions.
\end{remark}
Having obtained the Helmholtz shape derivative in volume form we can now deduce the shape derivate in surface form.
\begin{theorem}
	(Shape Derivative Boundary Form)
	Under the assumptions of Theorem \ref{Theo:2} the shape derivative of the objective $J_1$ (without perimeter regularization for full-reflecting boundaries) at $\Om$ in the
	direction $\Vv$ is given by
	\begin{equation}
	\begin{aligned}
	DJ_{1,\Gamma}(\Om)[\Vv]=&\Re\left[\int_{\Ge}\left(\nab v\cdot\nab\usol-k^2v\usol\right)\langle\Vv,\nv\rangle\right]
	\end{aligned}
	\label{Eq:22SD}
	\end{equation}
\end{theorem}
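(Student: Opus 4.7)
The plan is to recast the integrand of (\ref{Eq:21SD}) as a single contraction $A:\nab\Vv$ with the symmetric, stress-type tensor
\[A=(\nab v\cdot\nab\usol-k^2v\usol)I-\nab v\otimes\nab\usol-\nab\usol\otimes\nab v,\]
and then apply the divergence theorem. The state and adjoint Helmholtz equations are arranged precisely so that the arising volume remainder vanishes, leaving a pure boundary integral that collapses onto $\Ge$ once the boundary data are inserted.

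First I would verify in index notation that the two pieces of (\ref{Eq:21SD}) combine to $\int_\Om A:\nab\Vv\,dx$: the identity part of $A$ captures $(\nab v\cdot\nab\usol-k^2v\usol)\Div\Vv$, while the two outer products reproduce $\nab v\cdot(\nab\Vv+\nab\Vv^T)\nab\usol$. The divergence theorem then gives
\[\int_\Om A:\nab\Vv\,dx=\int_{\partial\Om}(A\nv)\cdot\Vv\,ds-\int_\Om(\Div A)\cdot\Vv\,dx.\]
A direct product-rule computation for $\Div A$ produces Hessian contributions $\nab^2 v\,\nab\usol$ and $\nab^2\usol\,\nab v$ from $\Div(\nab v\otimes\nab\usol)$ and $\Div(\nab\usol\otimes\nab v)$, which cancel exactly against those generated by $\nab(\nab v\cdot\nab\usol)$, leaving
\[\Div A=-(\Laplace\usol+k^2\usol)\nab v-(\Laplace v+k^2v)\nab\usol.\]
Both factors vanish identically by the state equation (\ref{Helmholtz}) and the adjoint equation (\ref{Eq:19Adjoint}), so $\Div A\equiv 0$ in $\Om$.

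Only the surface term then survives. Because the outer boundary $\partial\tilde\Om=\Ga\cup\Gb\cup\Gc\cup\Gd$ is fixed in the optimization, admissible deformation fields vanish there and the integral is automatically localized to $\Ge$. Expanding
\[(A\nv)\cdot\Vv=(\nab v\cdot\nab\usol-k^2v\usol)\langle\Vv,\nv\rangle-\frac{\partial\usol}{\partial\nv}(\nab v\cdot\Vv)-\frac{\partial v}{\partial\nv}(\nab\usol\cdot\Vv)\]
on $\Ge$, and inserting the full-reflection conditions $\partial\usol/\partial\nv=0$ from (\ref{PABC}) together with $\partial v/\partial\nv=k\alpha v=0$ from (\ref{Eq:19Adjoint}) (both at $\alpha=0$), the two cross terms drop out pointwise. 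Taking real parts then delivers (\ref{Eq:22SD}), which also matches the Hadamard structure theorem mentioned in the preceding remark: the shape derivative depends only on $\langle\Vv,\nv\rangle$ along the movable boundary.

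The main obstacle I anticipate is the tensor-calculus bookkeeping that yields $\Div A=0$: four Hessian terms must pair off with perfectly opposite signs, which is essentially the classical energy-momentum (Noether-type) identity for the Helmholtz operator applied to the pair $(\usol,v)$. Once that identity is secured, the remainder is a routine application of the divergence theorem plus the boundary data supplied by the hypotheses, so the work is front-loaded into verifying that one cancellation.
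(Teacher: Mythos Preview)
Your proposal is correct and follows precisely the route the paper indicates: the paper's own proof is a one-line sketch (``integration by parts on (\ref{Eq:21SD}) and vector calculus identities, we refer to \cite{Feijoo2003}''), and your energy-momentum tensor $A$ is exactly the device that organizes that integration-by-parts cleanly. Your argument is therefore not a different approach but a fully worked-out version of what the paper only gestures at; the identification $DJ_{1,\Om}[\Vv]=\int_\Om A:\nab\Vv$, the vanishing of $\Div A$ via the state and adjoint Helmholtz equations, and the elimination of the normal-derivative cross terms on $\Ge$ through the full-reflection conditions are all sound and standard.
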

\begin{proof}
	The result is obtained using integration by parts on (\ref{Eq:21SD}) and vector calculus identities, we refer to \cite{Feijoo2003} for a more detailed derivation.
\end{proof}
\begin{remark}
	For a sound-soft scatterer \cite{Feijoo2003} or partially absorbing boundary conditions (\ref{PABC}), we would obtain additional terms depending on partial derivatives in normal direction.
\end{remark}
Following \cite{FeijooG2004} we are now in the position to derive the topological derivative as described in the following theorem.
\begin{theorem}
	(Topological Derivative)
	Under the assumptions of Theorem \ref{Theo:2} the topological derivative $DJ_{1,T}$ of the objective $J_1$ (without perimeter regularization for full-reflecting boundaries) is given as
	\begin{equation}
	\begin{aligned}
	DJ_{1,T}=\Re\left[\nab v\cdot\nab\usol-k^2v\usol\right]
	\end{aligned}
	\label{Eq:23TD}
	\end{equation}
\end{theorem}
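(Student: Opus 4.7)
The plan is to connect the topological derivative to the boundary-form shape derivative via an asymptotic analysis of the cost when an infinitesimal sound-hard hole is inserted at an interior point $x_0 \in \Omega$. Concretely, I would let $B_\eps(x_0) \subset \Omega$ denote a ball of radius $\eps$, set $\Omega_\eps := \Omega \setminus \overline{B_\eps(x_0)}$, and equip the new boundary $\partial B_\eps$ with the same partially-absorbing boundary condition as $\Ge$ (in the full-reflecting limit, simply $\partial u_\eps / \partial \nv = 0$). The topological derivative is then defined as the coefficient of the leading term in the asymptotic expansion
\begin{align*}
J_1(\Omega_\eps) - J_1(\Omega) = f(\eps)\, DJ_{1,T}(x_0) + o(f(\eps)), \qquad \eps \to 0^+,
\end{align*}
with $f(\eps) = \pi \eps^2$ in two dimensions for Neumann-type holes.

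First I would derive a suitable integral representation of the cost difference by using the Lagrangian introduced in Section \ref{sec:HelmAdjShapOpt}, plugging the perturbed state $u_\eps$ and the unperturbed adjoint $v$ into $\mathcal{L}(\Omega_\eps, u_\eps, v, v_1, v_2, v_3)$ and subtracting $\mathcal{L}(\Omega, u, v, v_1, v_2, v_3)$. Since $v$ solves the adjoint (\ref{Eq:19Adjoint}) on $\Omega$, the terms linear in $u_\eps - u$ over the bulk cancel, and the leading contribution reduces to an integral over $B_\eps(x_0)$ of the integrand $\nabla v \cdot \nabla u - k^2 v u$ appearing in Theorem \ref{Theo:2}, plus a correction coming from the interaction of $u_\eps - u$ with the new Neumann boundary on $\partial B_\eps$.

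Next I would perform the matched asymptotic expansion of $u_\eps$ near $x_0$. Writing $u_\eps = u + w_\eps$, the corrector $w_\eps$ satisfies a Helmholtz equation on $\Omega_\eps$ with inhomogeneous Neumann data $\partial w_\eps / \partial \nv = -\partial u / \partial \nv$ on $\partial B_\eps$, and is $O(\eps)$ in an inner variable. The standard polarization-tensor computation in 2D (see \cite{FeijooG2004}) shows that $w_\eps$ contributes at order $\eps^2$ and that, together with the bulk integral of $\nabla v \cdot \nabla u - k^2 v u$ over $B_\eps$, localization at $x_0$ as $\eps \to 0$ yields
\begin{align*}
DJ_{1,T}(x_0) = \Re\left[\nabla v(x_0) \cdot \nabla u(x_0) - k^2 v(x_0) u(x_0)\right],
\end{align*}
taking the real part via the convention (\ref{Eq:CompProduct}). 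Since $x_0$ was arbitrary, this proves (\ref{Eq:23TD}).

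The main obstacle is the matched asymptotic analysis near the hole: one has to justify rigorously that the corrector $w_\eps$ does not contribute additional terms at order $\eps^2$ beyond the polarization-tensor contribution, and that the $k^2$ lower-order term is truly subdominant on the outer scale. In the sound-hard regime assumed in the statement this analysis is classical and can be quoted from \cite{FeijooG2004}; all that is needed locally is the Helmholtz regularity of both $u$ and $v$ at the interior point $x_0$, which follows from the $H^1$-regularity hypothesis combined with interior elliptic regularity.
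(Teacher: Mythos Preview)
Your proposal is correct but follows a genuinely different route from the paper. You carry out the classical direct asymptotic expansion: insert a hole $B_\eps(x_0)$, represent the cost difference via the Lagrangian, expand the perturbed state $u_\eps$ through a corrector $w_\eps$ and a polarization-tensor argument, and localize. The paper instead invokes the \emph{topological-shape sensitivity method} of Novotny et al.~\cite{Novotny2003,Novotny2013}, which dispenses with the explicit matched asymptotics by relating the topological derivative to a limit of the already-derived \emph{boundary-form} shape derivative (\ref{Eq:22SD}):
\[
DJ_{T}=\lim_{t\rightarrow 0^+}\left[\frac{1}{h'(t)V_n}\frac{d}{d(\delta t)}\Big|_{\delta t=0}J(\Om_{t+\delta t})\right],
\]
with $h(t)=\pi t^2$, $h'(t)=2\pi t$, and the estimates of \cite{FeijooG2004} used to justify passing to the limit. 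The paper's approach is more economical here because the boundary integrand $\nabla v\cdot\nabla u - k^2 vu$ is already available from Theorem~\ref{Theo:2}/(\ref{Eq:22SD}), so the topological derivative drops out almost immediately; your approach is more self-contained and makes the role of the corrector and the $f(\eps)=\pi\eps^2$ scaling explicit, at the cost of repeating asymptotic machinery that the TSSM encapsulates.
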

\begin{proof}
	The topological derivative is obtained using the topological-shape sensitivity method, that relates topological and shape derivatives \cite{Novotny2003}\cite{Novotny2013}, i.e.
	\begin{align}
	DJ_{T}=\lim_{t\rightarrow 0^+}\left[\frac{1}{h'(t)V_n}\frac{d}{d(\delta t)}\big|_{\delta t=0}J(\Om_{t+\delta t})\right]\label{Eq:TSSM}
	\end{align}
	where $\delta>0$, $h'(t)$ s.t. $0<DJ_T<\infty$.
	Simple insertion of (\ref{Eq:22SD}) in (\ref{Eq:TSSM}) and estimates as in \cite{FeijooG2004}, such that we can define $h'(t)=2\pi t$ and $h(t)=\pi t^2$, lead to the assertion.
\end{proof}
Finally for completeness we require the shape derivative of the volume penalty and the perimeter regularization as \cite{Sokolowski1992}
\begin{align}
DJ_3&=\nu_1\int_{\Om}div(\Vv)\diff x \label{Eq:31Dcvol}\\
DJ_4&=\nu_2\int_{\Ge}\kappa\langle \Vv,\nv\rangle \diff s=\nu_2\int_{\Ge}\nab\cdot \Vv-\langle \frac{\partial \Vv}{\partial \nv},\nv\rangle \diff s\label{Eq:30DJreg}
\end{align}

\section{Numerical Results} \label{sec:NumRes}
In this section we firstly describe the numerical algorithm to solve the PDE-constrained optimization problem and present applications to a simplistic domain for transmissive and non-transmissive obstacle such as a domain representing the Langue-de-Barbarie a coastal section in the north of Senegal.
\subsection{Implementation Details}
We are relying on the classical structure of adjoint-based shape optimization algorithms gradient-descent algorithms. However, we motivate the location and shape of the initial obstacle by the usage of the topological derivative $DJ_T$ (\ref{Eq:23TD}), that means we are exploiting the obtained scalar field to initialize an obstacle with the help of a filter which is based on the density-based spatial clustering algorithm (DBSCAN) \cite{Ester1996}. The obstacle is then deformed in a second step by the the usage of shape optimization. The procedure is shortly sketched in Figure \ref{fig:ShapeOptAlgo}.
\begin{figure}[htb!]
	\begin{algorithm}[H]
		\SetAlgoLined
		Evaluate Topological Derivative $DJ_{\{1,2\},T}$ \& initialize Obstacle via DBSCAN\\
		\While{$||DJ_{\{1,2\}}(\Om_k)||>\eps$}{
			2 Calculate State $u_k$\\
			3 Calculate Adjoint $v_k$\\
			4 Use $DJ_{\{1,2\},3,4}(\Om_k)[\Vv]$ to calculate Gradient $W_k$\\
			5 Perform Linesearch for $\tilde{W}_k$\\
			6 Deform $\Om_{k+1}\longleftarrow \mathcal{D}_{\tilde{W}_k}(\Om_k)$
		}
		\caption{Shape Optimization Algorithm}
	\end{algorithm}
	\caption{Shape Optimization Algorithm}
	\label{fig:ShapeOptAlgo}
\end{figure}

To compute the solution to the boundary value problem (\ref{Helmholtz})-(\ref{Sommerfeld}), the adjoint problem (\ref{Eq:19Adjoint}) and to finally deform the domain we are relying on the finite element solver FEniCS \cite{FEniCS2015}. High accuracy for the elliptic, constraining PDE is achieved using a Continuous Galerkin (CG) method of order $p\ge1$ to discretize in space.
The mentioned deformation or update of the finite element mesh in each iteration is done via the solution $W:\Om\rightarrow\R^2$ of the linear elasticity equation, which stems from the usage of the Steklov-Poincaré metric \cite{Schulz2016}
\begin{equation}
\begin{aligned}
\int\sigma(\Wv):\eps(\Vv)&=DJ[\Vv] \hspace{1cm} \quad &\forall\Vv\in H_0^1(\Om,\R^2)\\
\sigma:&=\lambda Tr(\eps(\Wv))I+2\mu\eps\\
\eps(\Wv):&=\frac{1}{2}(\nab \Wv+\nab \Wv^T)\\
\eps(\Vv):&=\frac{1}{2}(\nab \Vv+\nab \Vv^T)
\end{aligned}
\label{Eq:29LinearElasticity}
\end{equation}
where $\sigma$ and $\eps$ are called strain and stress tensor and $\lambda$ and $\mu$ are called Lame parameters. In our calculations we have chose $\lambda=0$ and $\mu$ as the solution of the following Poisson Problem 
\begin{equation}
\begin{aligned}
-\bigtriangleup\mu&=0 \hspace{1cm} &\text{in }\quad &\Om&&\\
\mu&=\mu_{max} \hspace{1cm} &\text{on } \quad&\Ge&&\\
\mu&=\mu_{min} \hspace{1cm} &\text{on }\quad &\Ga, \Gb,\Gc,\Gd&&\\
\end{aligned}
\label{Eq:33Lame}
\end{equation}
The source term $DJ[\Vv]$ in (\ref{Eq:29LinearElasticity}) consists of a volume and surface part, i.e. $DJ[\Vv]=DJ_\Om[\Vv] + DJ_{\Ge}[\Vv]$. 
Here the volumetric share comes from our Helmholtz shape derivative $DJ_{\{1,2\}}$ and the shape derivative volume penalty $DJ_3$, where we only assemble for test vector fields whose support intersects with the interface $\Ge$ and is set to zero for all other basis vector fields. 
The surface part comes from the shape derivative parameter regularization $DJ_4$.
\begin{remark}
	In order to guarantee the attainment of useful shapes, which minimize the objective, a backtracking line search is used, which limits the step size in case the shape space is left \cite{Welker2017} i.e. having intersecting line segments or the objective is non-decreasing.
\end{remark}
\begin{remark}
	Solutions to state, adjoint and shape derivative, require the manual calculation of the real part by splitting all occurring trial and test functions in real and imaginary part.
\end{remark}

\subsection{Ex.1: The Simplistic Mesh}\label{sec:exhalfcircled}
In the first example, we will look at the model problem, that was described in Section \ref{HelmModForm}. We interpret $\Ga$ and $\Ge$ as the reflective coastline and obstacle, $\Gb$ and $\Gc$ as the lateral, such as $\Gd$ as the open sea boundary. As it is described before in Section \ref{sec:NumRes}, the topological derivative can be used in line with a filter to determine the location of an initial obstacle. Exemplifying, we show in Figure \ref{fig:TopoDer} an initial field on the simplistic mesh for which the topological derivative is calculated. A filter in form of a DBSCAN-algorithm is then used in Figure \ref{fig:DBSCAN} to initialize an obstacle. Here results are shown for a minimum number of points in a cluster $m=10$ and threshold $\eps=7$, where colours apart from blue build useful clusters.
\begin{figure}[htb!]
	\begin{center}
		\includegraphics[scale=0.2]{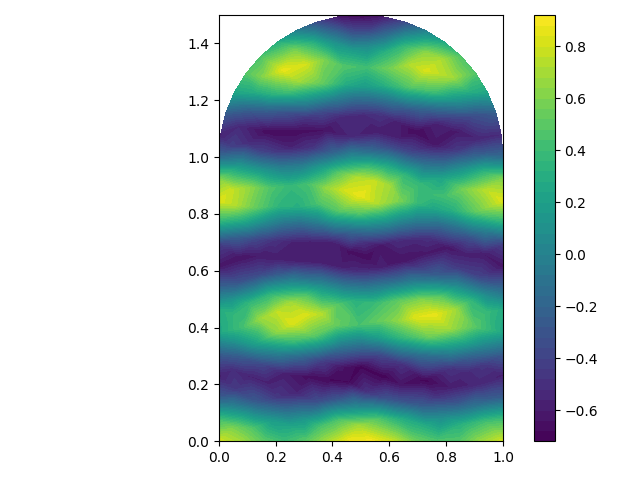}
		\includegraphics[scale=0.2]{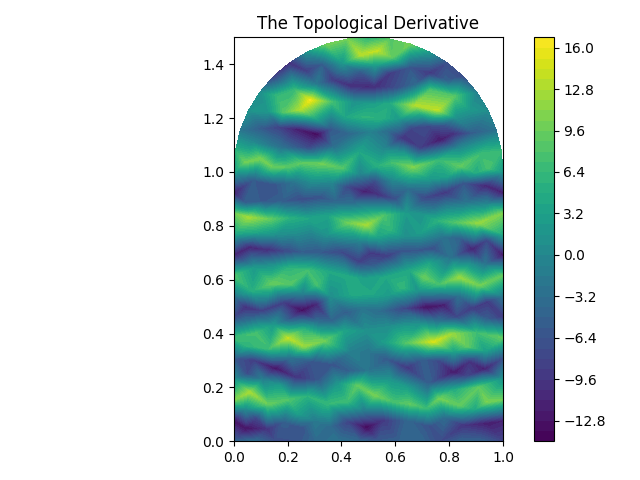}
		\caption[Field for No Obstacle \& Topological Derivative]{1. Field for no Obstacle, 2. Topological Derivative}
		\label{fig:TopoDer}
	\end{center}
\end{figure}

\begin{figure}[htb!]
	\begin{center}
		\includegraphics[scale=0.2]{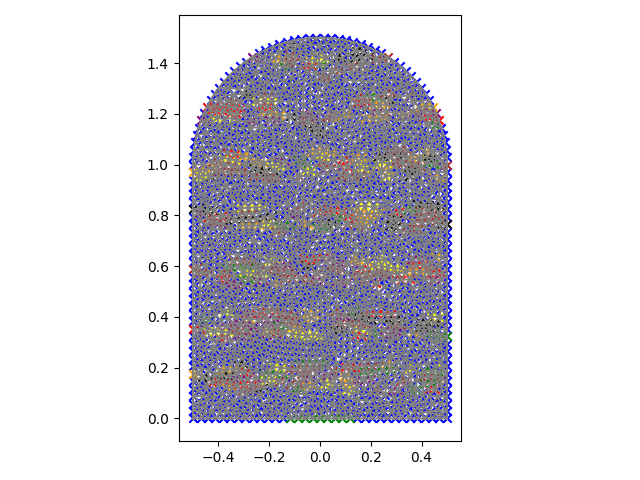}
	\end{center}
	\caption{Solution to DBSCAN}
	\label{fig:DBSCAN}
\end{figure}
We have used this information to generate the meshes in Figure \ref{fig:Ex1Initial Meshes} with the the mesh generator GMSH \cite{Geuzaine2009}. We have discretized finer around the obstacle to ensure a high resolution for the shape optimization routine. A reducing effect along the shore $\Ga$ of the created meshes for single and multi-wave case can already be observed for the pure placement of the obstacles in Figure \ref{fig:Ex1InitialFields}.  The forthcoming analysis is based on wave description (\ref{Helmholtz}), we firstly model a single wave perpendicular to the obstacle's lower boundary by choosing $\phi=1.5\pi$ and a suitable wave number e.g. $k=12$ in the first two figures of Figures \ref{fig:Ex1InitialFields} and \ref{fig:Ex1FinalFields}. In the multi wave case (\ref{Eq:ObjectiveSum}) we model the sum of $N=3$ waves with $\phi_j\in\{1.25\pi,1.5\pi,1.75\pi\}$ for weights $w_j\in\{0.5,0.4,0.1\}$ such as $M=2$ frequencies with $k_i\in\{11,15\}$, which can be taken from the third figures in Figures \ref{fig:Ex1InitialFields}, \ref{fig:Ex1FinalFields} and can be interpreted as strong waves from north-west. In all the test cases we model full reflection, i.e. $\alpha=0$. In the objective we enforce regularization of the perimeter by a weight of $\nu_2=0.1$. The solution to the state and adjoint equation is of linear nature, hence we use the FEniCS solver for linear partial differential equations. Having solved state and adjoint equations the mesh deformation is performed as described before, where we specify $\mu_{min}=10$ and $\mu_{max}=100$ in (\ref{Eq:33Lame}). The step size is at $\rho=0.04$ and shrinks whenever criteria for line searches are not met.
We can extract results of shape optimization from Figure \ref{fig:Ex1FinalFields}, where we due to Figure \ref{fig:Ex1Objective} have obtained a significant decrease in the objective values.
\begin{figure}[!htbp]
	\centering
	\begin{tikzpicture}
	\node[anchor=south west,inner sep=0] (1) 
	{\includegraphics[scale=0.21]{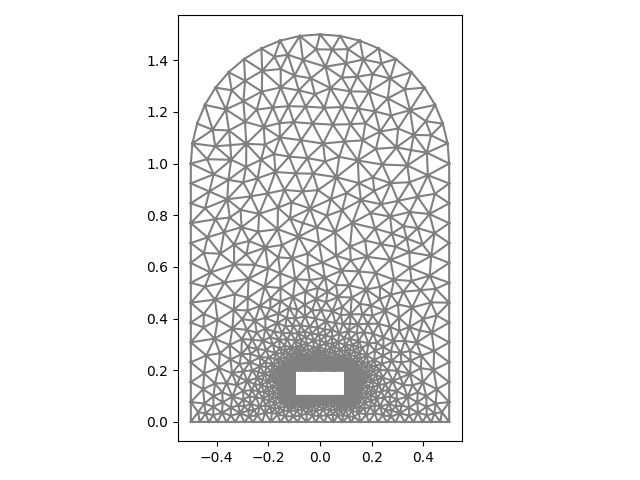}};
	\vspace{-3cm}
	\node[right= 0.01cm of 1](2)
	{\includegraphics[scale=0.21]{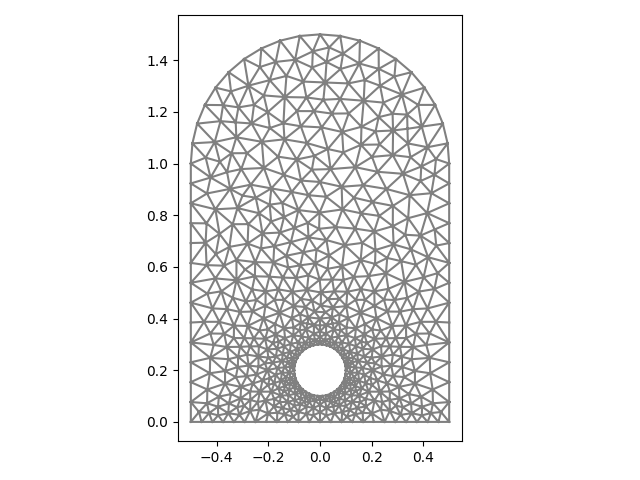}};
	\end{tikzpicture}
	\caption[Ex.1 Initial Meshes]{1.: Initial Mesh for Rectangular Obstacle, 2.:  Initial Mesh for Circled Obstacle}
	\label{fig:Ex1Initial Meshes}
\end{figure} 
\begin{figure}[!htbp]
	\centering
	\begin{tikzpicture}
	\node[anchor=south west,inner sep=0] (1) 
	{\includegraphics[scale=0.21]{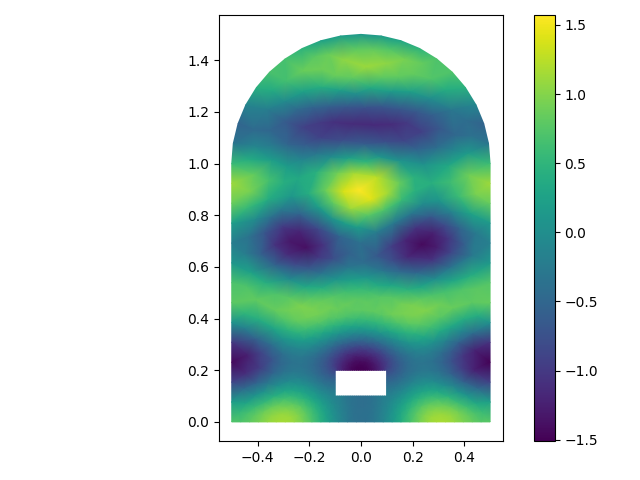}};
	\node[right= 0.01cm of 1](2)
	{\includegraphics[scale=0.21]{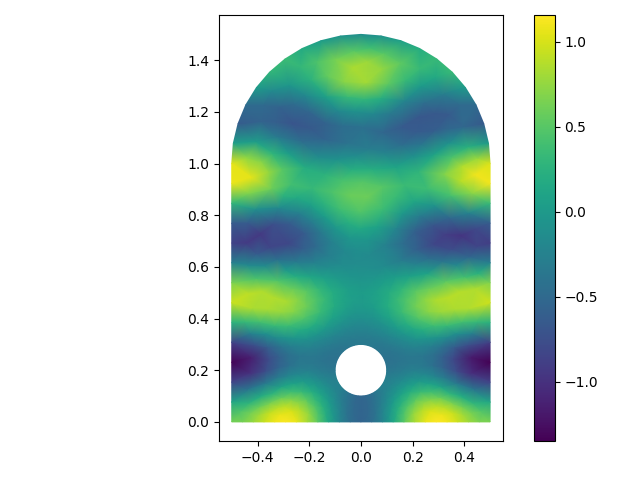}};
	\node[right = 0.01cm of 2](3)
	{\includegraphics[scale=0.21]{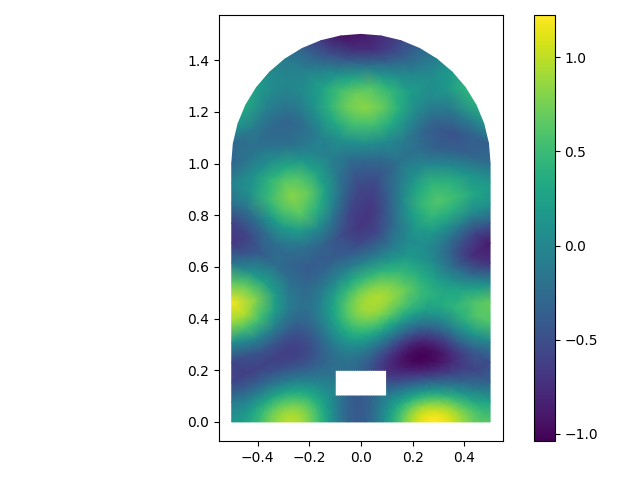}};
	\end{tikzpicture}
	\caption[Ex.1 Initial Fields]{1.: Initial Field for Rectangular Obstacle, 2.:  Initial Field for Circled Obstacle, 3.: Initial Field for Multiple Waves and Rectangle Obstacle}
	\label{fig:Ex1InitialFields}
\end{figure}
\begin{figure}[!htbp]
	\centering
	\begin{tikzpicture}
	\node[anchor=south west,inner sep=0] (1) 
	{\includegraphics[scale=0.21]{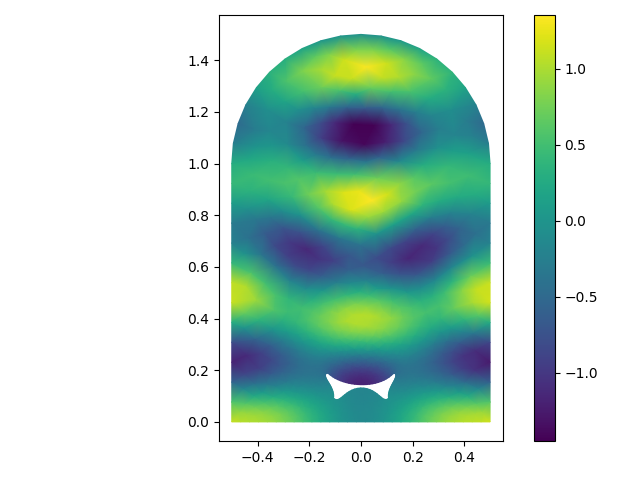}};
	\node[right= 0.01cm of 1](2)
	{\includegraphics[scale=0.21]{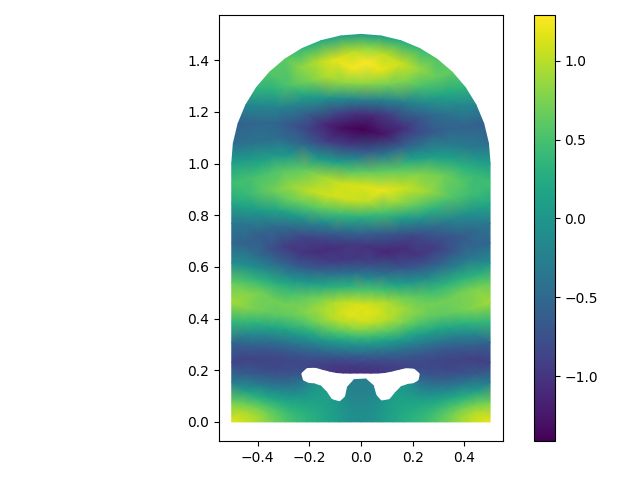}};
	\node[right = 0.01cm of 2](3)
	{\includegraphics[scale=0.21]{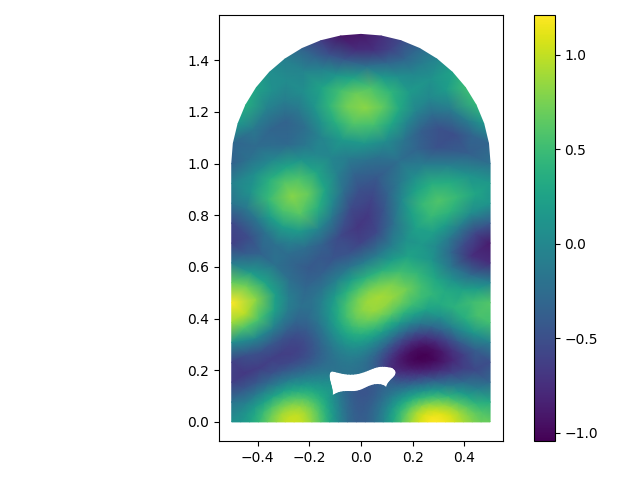}};
	\end{tikzpicture}
	\caption[Ex.1 Optimized Fields]{1.: Optimized Field for Rectangular Obstacle, 2.:  Optimized Field for Circled Obstacle, 3.: Optimized Field for Two Circled Obstacles }
	\label{fig:Ex1FinalFields}
\end{figure}
\begin{figure}[!htbp]
	\centering
	\begin{tikzpicture}
	\hspace{0.5cm}
	\node[anchor=south west,inner sep=0] (1) 
	{\includegraphics[scale=0.17]{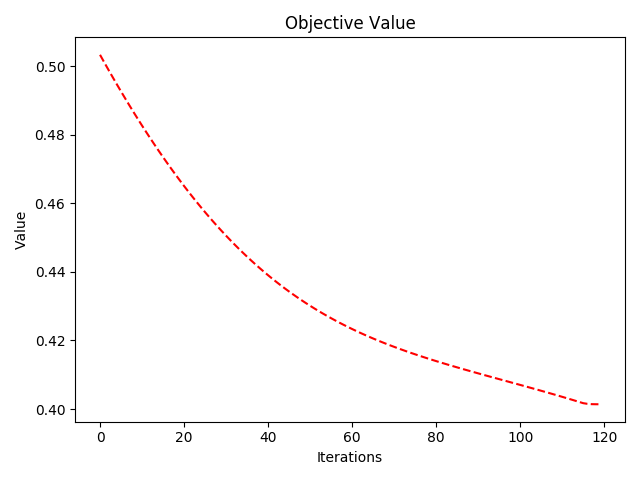}};
	\node[right= 0.01cm of 1](2)
	{\includegraphics[scale=0.17]{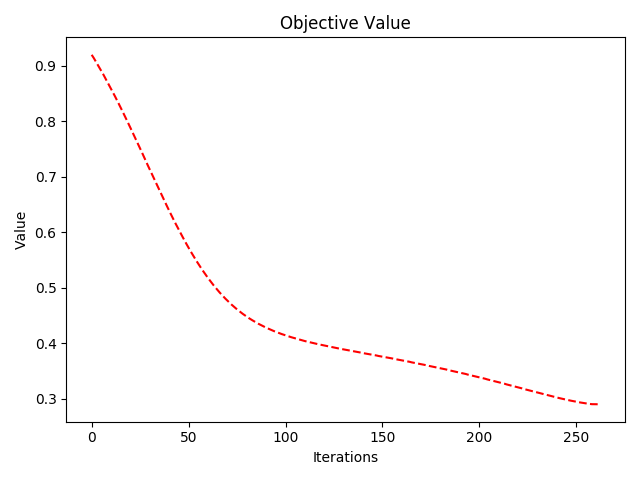}};
	\node[right = 0.01cm of 2](3)
	{\includegraphics[scale=0.17]{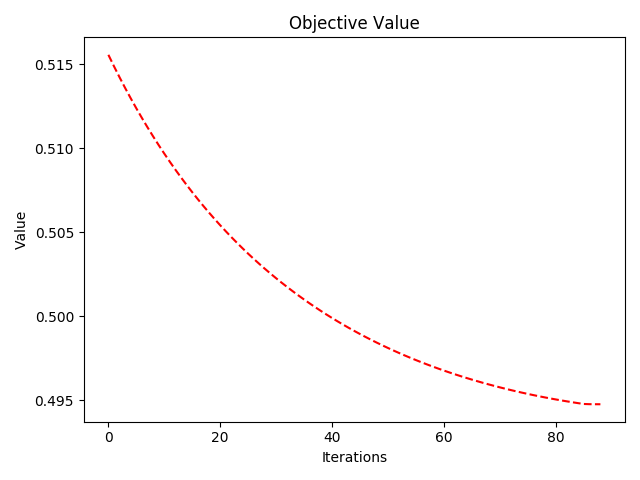}};
	\end{tikzpicture}
	\caption[Ex.1 Objectives]{1.: Objective for Rectangular Obstacle, 2.:  Objective for Circled Obstacle, 3.: Objective for Two Circled Obstacles }
	\label{fig:Ex1Objective}
\end{figure}
We now turn towards the transmissive obstacle case, i.e. (\ref{HelmholtzOm})-(\ref{Eq:HelmholtztransmissiveBC2}) is used. Wave-settings equal to the mono-wave from before but working with a discontinuous porosity coefficient for $\phi_1=1$ and $\phi_2=0.1$, that is pictured in Figure \ref{fig:TransmFigures}. We remark that for an appropriate evaluation of the weak form (\ref{HelmModForm}) the discontinuity must be inline with the positioning of the mesh nodes. We then once more target to track a specified height of field $\usol$ (cf. to Figure \ref{fig:TransmFigures}, 1., left). The optimized form, which can be seen in Figure \ref{fig:TransmResults}, is especially notable. It forms two lines of vertically shifted stretched obstacles, which is due to the initial placement of the obstacle, participating on two increased fronts in the scalar field of the topological derivative (cf. to Figure \ref{fig:TopoDer}). The shape is then minimized in areas of low values for the mentioned scalar field. The final shape was reached after $13765$ iterations, where the norm of the gradient reached the convergence threshold. The convergence of the objective function can be taken from Figure \ref{fig:TransmResults}.
\begin{figure}[!htb]
	\centering
	\begin{tikzpicture}
	\node[anchor=south west,inner sep=0] (1) 
	{\includegraphics[scale=0.2]{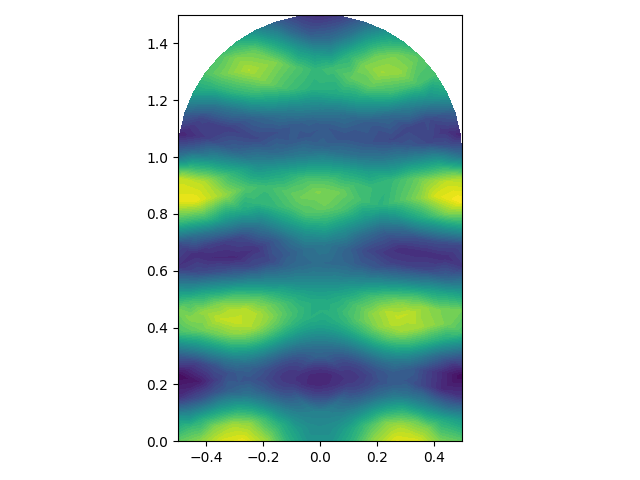}};
	
	\node[right= 0.01cm of 1](2)
	{\includegraphics[scale=0.2]{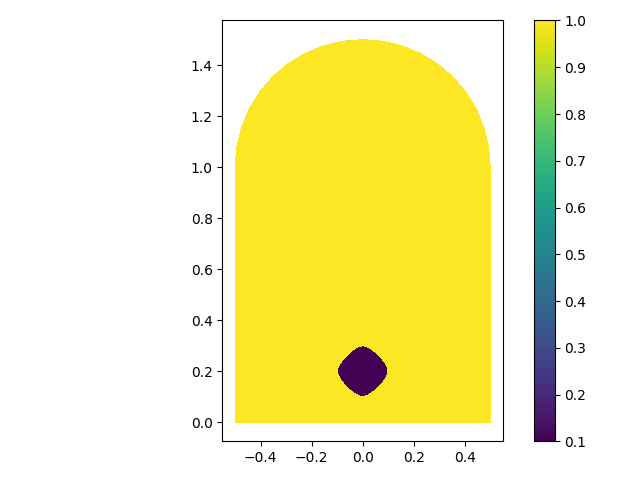}};
	\end{tikzpicture}
	\caption[Ex.1 Transmissive Setting]{1.: Initial Mesh for Smoothed Rectangular Obstacle, 2.: Discontinuous Permeability Coefficient $\phi$}
	\label{fig:TransmFigures}
\end{figure} 
\FloatBarrier
\begin{figure}[!htb]
	\centering
	\begin{tikzpicture}
	\node[below= 0.01cm of 1](3)
{\includegraphics[scale=0.2]{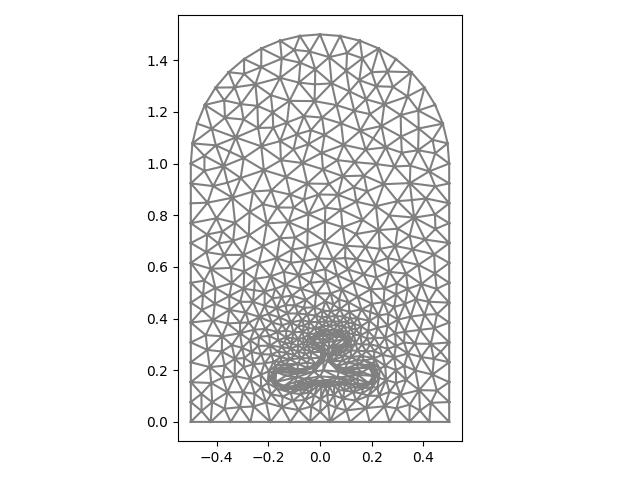}};
\node[right= 0.01cm of 3](4)
{\includegraphics[scale=0.2]{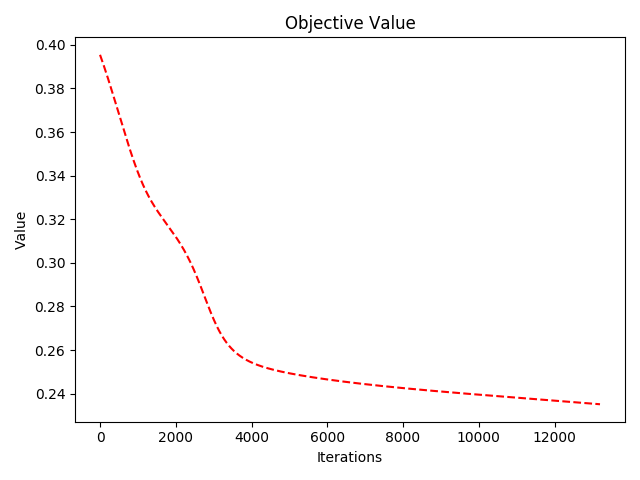}};
\end{tikzpicture}
	\caption[Ex.1 Transmissive Results]{ 1.: Optimized Mesh, 2.: Objective Function}
\label{fig:TransmResults}
\end{figure} 
\subsection{Ex.2: Langue-de-Barbarie}
A more realistic computation is performed in the second example. Here we look at the Langue-de-Barbarie a coastal section in the north of Dakar, Senegal. In 1990 it consisted of a long offshore island, which eroded in three parts within two decades. Waves now travel unhindered to the mainlands, which causes severe damage and destroyed large habitats.
Adjusting our model to this specific coastal section starts on mesh level. Shorelines are taken from the free GSHHG databank\footnote[2]{https://www.ngdc.noaa.gov/mgg/shorelines/} following \cite{Avdis2016}. We build an interface from a geographical information system (QGIS3) for processing the data to the Computer Aided Design software GMSH for the mesh generation.  Similar to the preceding example, we interpret $\Ga,\Gd$ as coastline for islands and mainlands such as the open sea boundary. We have inserted a smaller island, which shape is to be optimized in front of the second and third with boundary denoted as $\Ge$ (cf. to Figure \ref{fig:ShapeOptLDBFR},\ref{fig:ShapeOptLDBPR}). The waves
propagation is modelled mono-directionally to the shores with $\phi=1.8\pi$ and $k=35$. The initial mesh can be extracted from Figure \ref{fig:WavePropagationLDBMesh}.
\begin{figure}[!htbp]
	\centering
	\begin{tikzpicture}
	\node[anchor=south west,inner sep=0] (1) 
	{\includegraphics[width=4.5cm,height=3.5cm,scale=0.5]{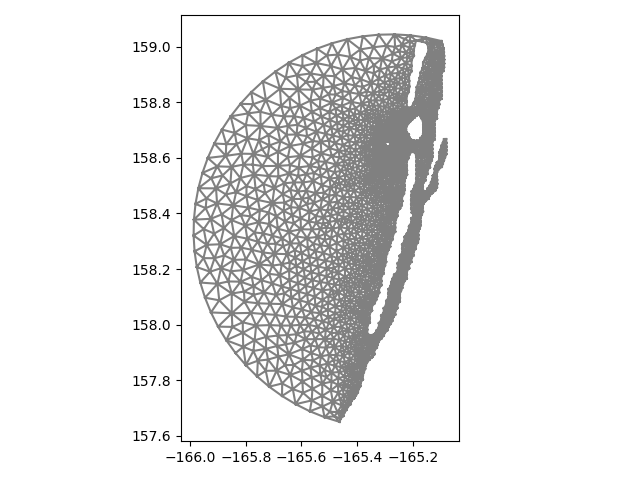}};
	\end{tikzpicture}
	\caption[Ex.2 Initial Mesh]{Initial LdB Mesh}
	\label{fig:WavePropagationLDBMesh}
\end{figure}
 Figures \ref{fig:ShapeOptLDBFR} and \ref{fig:ShapeOptLDBPR} picture fields to the optimized meshes for $\alpha_1=0$ and $\alpha_1=0.2$ using (\ref{Eq:Isaacsonalpha}) and an initial step size $\rho=0.01$ after $177$ and $39$ steps of optimization.
\begin{figure}[!htb]
	\centering
	\begin{tikzpicture}
	\node[anchor=south west,inner sep=0] (1) 
	{\includegraphics[width=4.5cm,height=3.5cm,scale=0.5]{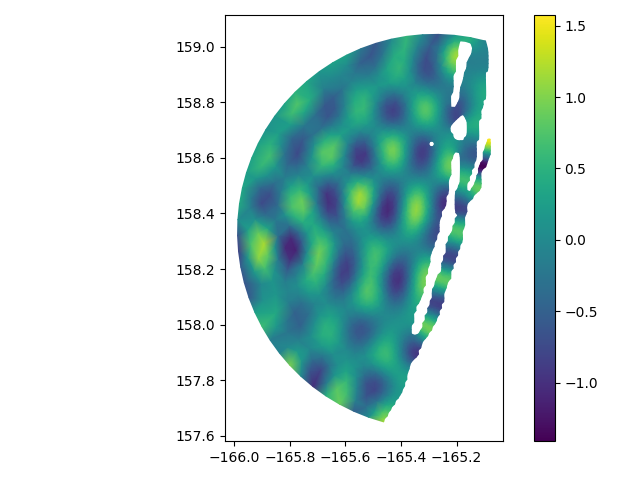}};
	\node[right= 2cm of 1](2)
	{\includegraphics[width=4.5cm,height=3.5cm,scale=0.5]{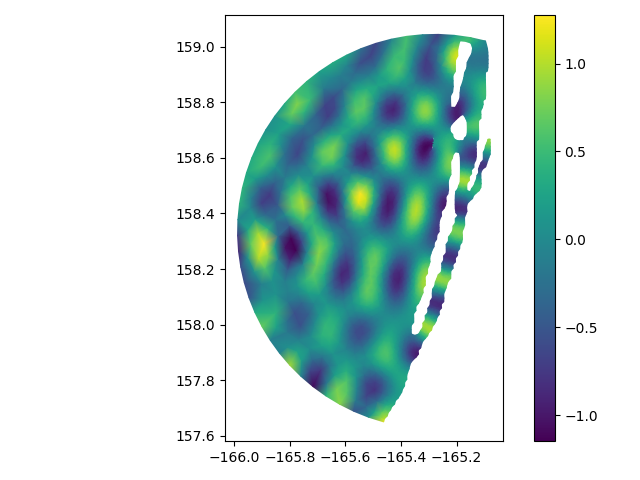}};
	
	\node (7) at (2.56,3.5)
	{\tiny Initial Field};
	\node (8) at (9.2,3.5)
	{\tiny Optimized Field};
	\draw[draw=red] (9.6,2.35) rectangle ++(0.2,0.2);
	\draw[->, >=stealth', shorten >=1pt, thick, draw= red] (9.6,2.35)   -- (11.19,2.35);
	\node (8) at (12.7,2.8)
	{\includegraphics[width=3cm,height=3cm,scale=0.5]{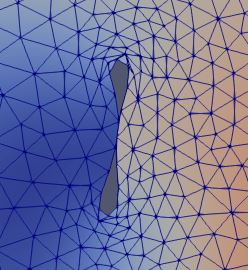}};
	
	\draw[draw=red] (2.97,2.35) rectangle ++(0.2,0.2);
	\draw[->, >=stealth', shorten >=1pt, thick, draw= red] (3,2.35)   -- (4.5,2.35);
	\node (9) at (6,2.8)
	{\includegraphics[width=3cm,height=3cm,scale=0.5]{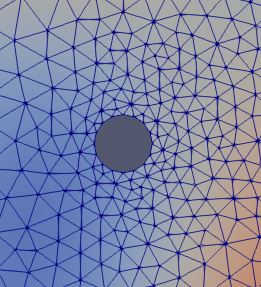}};
	\end{tikzpicture}
	\caption[Ex.2 Initial \& Optimized Field \& Obstacle]{1.: Initial Field and Obstacle for Full Reflection,
		2.: Optimized Field and Obstacle for Full Reflection}
	\label{fig:ShapeOptLDBFR}
\end{figure}
\begin{figure}[!htb]
	\centering
	\begin{tikzpicture}
	\node[anchor=south west,inner sep=0] (1) 
	{\includegraphics[width=4.5cm,height=3.5cm,scale=0.5]{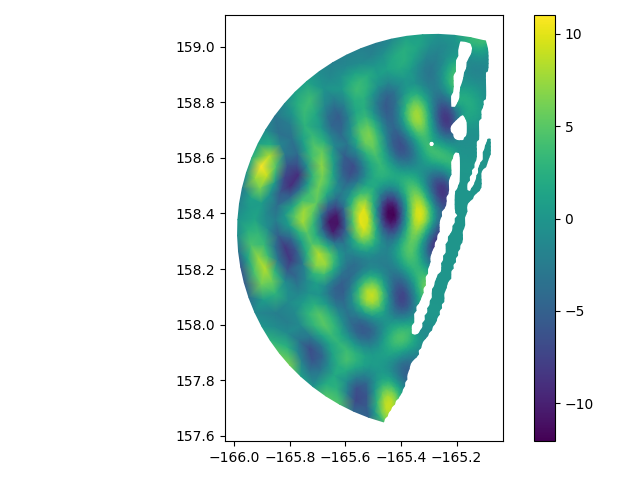}};
	\node[right= 2cm of 1](2)
	{\includegraphics[width=4.5cm,height=3.5cm,scale=0.5]{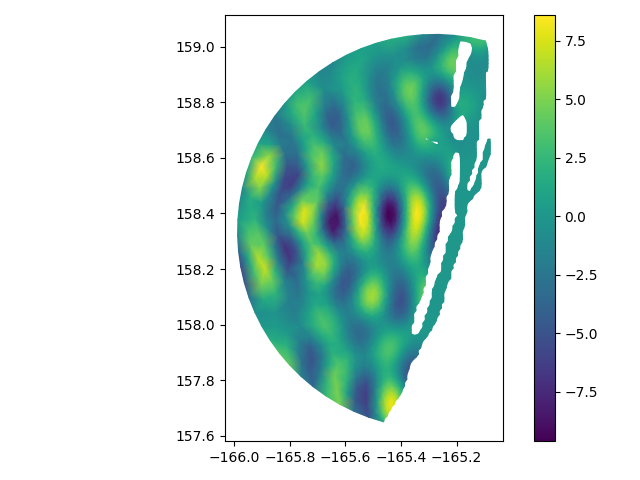}};
	
	\node (7) at (2.56,3.5)
	{\tiny Initial Field};
	\node (8) at (9.2,3.5)
	{\tiny Optimized Field};
	\draw[draw=red] (9.6,2.35) rectangle ++(0.2,0.2);
	\draw[->, >=stealth', shorten >=1pt, thick, draw= red] (9.6,2.35)   -- (11.19,2.35);
	\node (8) at (12.7,2.8)
	{\includegraphics[width=3cm,height=3cm,scale=0.5]{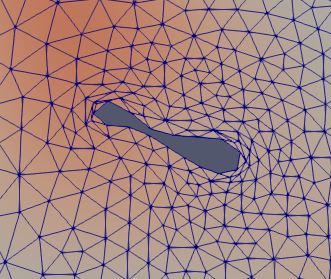}};
	
	\draw[draw=red] (2.97,2.35) rectangle ++(0.2,0.2);
	\draw[->, >=stealth', shorten >=1pt, thick, draw= red] (3,2.35)   -- (4.5,2.35);
	\node (9) at (6,2.8)
	{\includegraphics[width=3cm,height=3cm,scale=0.5]{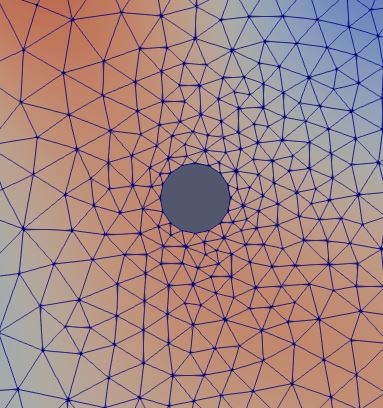}};
	\end{tikzpicture}
	\caption[Ex.2 Initial \& Optimized Field \& Obstacle]{1.: Initial Field and Obstacle for Partial Reflection,
		2.: Optimized Field and Obstacle for Partial Reflection}
	\label{fig:ShapeOptLDBPR}
\end{figure}

One can observe a similar behaviour as in Subsection \ref{sec:exhalfcircled}, where the obstacle is stretched to protect an as large as possible area. The computation stopped after obtaining intersecting line segments at the obstacle's centre in Figure \ref{fig:ShapeOptLDBFR} and reaching the convergence threshold for the norm of the gradient in Figure \ref{fig:ShapeOptLDBPR}. However, in Figures \ref{fig:TargetFunctionLDBFR} and \ref{fig:TargetFunctionLDBPR} we can still note a significant decrease in the target functional for both cases, taking into account that the area of the scatterer is comparably low to the area of the shorelines. 

\begin{figure}[!htb]
	\centering
	\begin{tikzpicture}
	\node[anchor=north west,inner sep=0] (6)  {\includegraphics[scale=0.17]{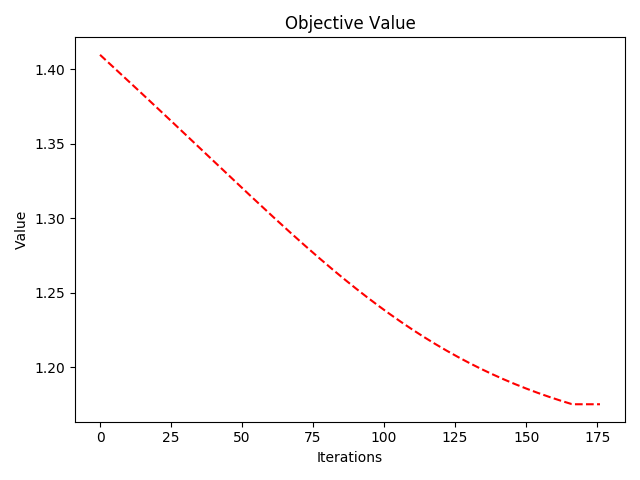}};
	\end{tikzpicture}
	\caption[Ex.2 Target Functional]{LdB Target Functional for Full Reflection}
	\label{fig:TargetFunctionLDBFR}
\end{figure}
\begin{figure}[!htb]
	\centering
	\begin{tikzpicture}
	\node[anchor=north west,inner sep=0] (6)  {\includegraphics[scale=0.17]{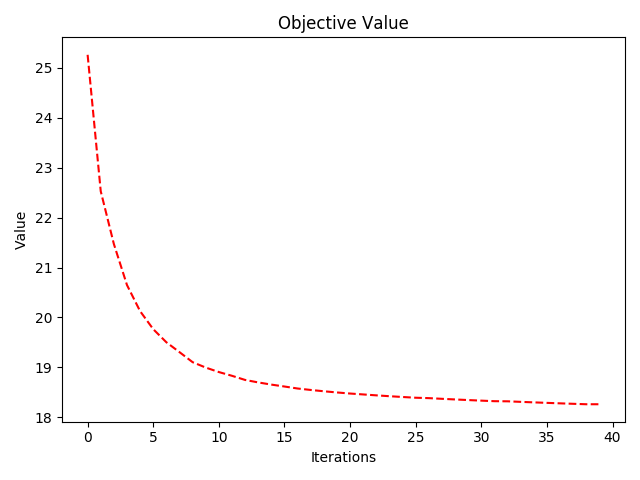}};
	\end{tikzpicture}
	\caption[Ex.2 Target Functional]{LdB Target Functional for Partial Reflection}
	\label{fig:TargetFunctionLDBPR}
\end{figure}
\begin{remark}
	From a practical, durability standpoint thin obstacles as in Figures \ref{fig:Ex1FinalFields} and \ref{fig:ShapeOptLDBFR}, which also lead to breakdown of the optimization algorithm, due to intersecting line segments, are not ideal. To circumvent this problem we have used a thinness penalty (cf. to \cite{Allaire2016}) in works dealing with Shallow Water Equations \cite{Schlegel20212}\cite{Schlegel20213}, but this should not be treated here.
\end{remark}

\section{Conclusion}
We have derived the stationary continuous adjoint and shape derivative  in volume form for the Helmholtz equation with suitable boundary conditions. The results were tested on a 2-dimensional simplistic domain and a more comprehensive one picturing the Langue-de-Barbarie coastal section. The optimized shape strongly orients itself to wave directions and to the respective coastal section that is to be protected. The results can be easily adjusted for arbitrary meshes, objective functions and different wave such as boundary properties.

\section*{Acknowledgement}
This work has been supported by the Deutsche
Forschungsgemeinschaft within the Priority program SPP 1962 "Non-smooth and Complementarity-based Distributed Parameter Systems: Simulation and Hierarchical Optimization". The authors would like to thank Diaraf Seck (Université Cheikh Anta Diop, Dakar, Senegal) and Mame Gor Ngom (Université Cheikh Anta Diop, Dakar, Senegal) for helpful and interesting discussions within the project Shape Optimization Mitigating Coastal Erosion (SOMICE). 

\bibliographystyle{unsrt}
\bibliography{bibliography}  






\end{document}